\documentclass{article}

\usepackage{amsmath, amsthm, amsfonts, amssymb}
\usepackage{mathptmx}
\usepackage{graphicx}
\usepackage{fancybox}
\usepackage{color}
\usepackage{ulem}

\numberwithin{equation}{section}
\def\supp{\operatorname{supp}}

\def\div{\operatorname{div}}
\def\a{\alpha}
\def\e{\varepsilon}

\def\R{\bf R}

\newtheorem{thm}{Theorem}[section]
\newtheorem{df}{Definition}[section]

\newtheorem{lem}{Lemma}[section]
\newtheorem{cor}{Corollary}[section]
\newtheorem{prop}{Proposition}[section]
\newtheorem{rem}{Remark}[section]
\numberwithin{equation}{section}
\newtheorem{exam}{Example}[section]

\setcounter{section}{0}
\title{Generalized weighted Hardy's inequalities \\with compact perturbations}
\author{HIROSHI ANDO  and  TOSHIO HORIUCHI}
\date{}

\begin{document}

\maketitle

\today

\begin{abstract}
Let  $\Omega $ be  a  bounded domain of ${\R}^N \ (N\ge 1)$ with boundary of class $C^2$.
In the present paper  we  shall study  a variational problem relating 
the weighted Hardy inequalities with sharp missing terms established in \cite{H}.
As weights  we treat non-doubling 
functions of the  distance  $\delta(x)=\text{dist}(x,\partial \Omega)$ to the   boundary $\partial\Omega.$
  \footnote{ Keywords: Weighted Hardy's inequalities, nonlinear eigenvalue problem, 
Weak  Hardy property, $p$-Laplace operator with weights.\\
2010 mathematics Subject Classification: Primary 35J70, Secondary 35J60, 34L30, 26D10.\\ 
This research was partially supported by Grant-in-Aid for Scientific Research (No. 20K03670, No. 21K03304). }
\end{abstract}

\section{ Introduction}
Let $W({\R}_+)$ be a class of  functions
$$  \{ w(t)\in C^1({\R}_+): w(t)>0, \lim_{t\to+0}w(t)=a \ \text{for some} \ a\in [0,\infty] \}$$  with ${\R}_+=(0,\infty).$ 
For $1<p<\infty$, 
as weights of Hardy's inequalities we adopt functions $W_p(t)=w(t)^{p-1}$ with $w(t)\in P({\R}_+) \cup Q({\R}_+)$, where   
 \begin{equation}  \begin{cases}P({\R}_+)= \{ w(t)\in W({\R}_+) : \,  w(t)^{-1} \notin L^1((0,\eta)) \ \text{for some} \ \eta >0\},\\
 Q({\R}_+) =\{ w(t)\in W({\R}_+) :  \, w(t)^{-1}\in L^1((0,\eta)) \ \text{for any} \ \eta >0 \}.
 \end{cases}
 \end{equation}
Clearly $W({\R}_+)=P({\R}_+) \cup Q({\R}_+)$  and $P({\R}_+)\cap Q({\R}_+)= \emptyset$. 
(For the precise definitions see the  section 2. 
See also \cite{H}, \cite{AHL}.)  
A positive { continuous} function $w(t)$ on $\R_+$ is  said to be a  doubling weight  if  there exists a positive number $C$  such  that  we  have 
\begin{equation} C^{-1} w(t)\le w(2t)\le C w(t) \qquad\text{for all} \ \ \ t\in\R_+.\label{doubling} \end{equation}
When $w(t)$ does not possess this property, $w(t)$ is  said to be  a non-doubling weight in the  present paper.
In one-dimensional case we   typically treat a weight function $w(t)$ that  may vanish or  blow up in infinite order such  as $e^{-1/t}$ or $e^{1/t}$ at $t=0$. 
In  such cases the limit of ratio $w(t)/w(2t)$ as  $t\to +0$ may become $0$ or $+\infty$, and hence they are regarded as non-doubling weights according to our notion. \par
In \cite{H}, 
 we  have established 
 $N$-dimensional  Hardy inequalities with non-doubling
  weights  being  functions  of the  distance  
$\delta (x)=\text{dist}(x,\partial\Omega)$ to the boundary $\partial\Omega$,
  where $\Omega$   is a 
{
bounded domain of class $C^2$ in $\mathbf R^N$. 
}
In  this  paper we  shall study  a variational problem relating to those new inequalities. 

We  prepare  more  notations to  describe  our  results. 
Let $1<p<\infty$. 
For  $ W_p (t)= w(t)^{p-1}$ with $ w(t)\in W({\R}_+)$, we  define  a weight function $W_p(\delta(x))$ on $\Omega$  by $$W_p(\delta(x))=(W_p\circ\delta)(x).$$
By
  $ L^p(\Omega;W_p(\delta))$ we denote the space of Lebesgue measurable functions with weight $ W_p(\delta(x))$, 
for which 
\begin{equation} \| u \|_{ L^p(\Omega;W_p(\delta))} = \bigg( \int_{\Omega}|u(x)|^p W_p(\delta(x))\,dx\bigg ) ^{\!1/p} < +\infty.\label{2.1} \end{equation} 
 $W_{0}^{1,p}(\Omega;W_p(\delta))$ is given by the  completion of $C_c^\infty(\Omega)$ with respect to the norm 
defined by
\begin{equation} 
\| u\|_{ W_{0}^{1,p}(\Omega;W_p(\delta))} =
\| |\nabla u|  \|_{ L^p(\Omega;W_p(\delta))} + \| u\|_{ L^p(\Omega;W_p(\delta))}. \label{2.2}
\end{equation}
Then, 
 $W_{0}^{1,p}(\Omega;W_p(\delta)) $  becomes a Banach space  
with the norm $\| \cdot\|_{ W_{0}^{1,p}(\Omega;W_p(\delta))}$. 
Under these preparation we  recall the weighted Hardy inequalities in \cite{H}. 
(See Theorem \ref{CT3} and its corollary in Section 2.) 
In particular for $w(t)\in Q({\R}_+)$,
 we   have a simple inequality as Corollary  \ref{NCC1}, which  is a generalization of classical Hardy's inequality:
\begin{equation}\label{HI}
\int_\Omega  |\nabla u(x)|^p W_p(\delta(x))\,dx 
\ge \gamma \int_\Omega \frac{|u(x)|^p W_p(\delta(x))}{F_{\eta_0}(\delta(x))^p} \,dx
\end{equation}
for $u(x)\in W_{0}^{1,p}(\Omega;W_p(\delta)),$  
where $ \eta_0$ is a sufficiently small positive number, $\gamma$ is some  positive  constant and $F_{\eta_0}(t)$ is a positive function defined in Definition \ref{D2}. In particular if $w(t)=1$, then $F_{\eta_0}(t)=t \ (0<t\le \eta_0)$ and 
  (\ref{HI})  becomes  a well-known Hardy's inequality, which  is valid for  a bounded domain $\Omega $  of  ${\R}^N$ with Lipschitz boundary 
(cf. \cite{BM}, \cite{D}, \cite{MMP}, \cite{MS}).   
Further
if $\Omega$ is  convex,  then   $\gamma = \Lambda_{p}:= (1-1/p)^p$ holds  for   arbitrary $1<p<\infty$ (see \cite{MS}). \par 
\par\medskip
In the  present paper we consider the following variational problem relating 
 the general Hardy's inequalities 
   established in \cite{H}. 
For  $\lambda \in \R$, $W_p(t)= w(t)^{p-1}$ and  $ w(t)\in W_A({\R}_+)\,(\subset W(\R_+))$,
the  following variational problem (\ref{J}) can be   associated with (\ref{HI}): 
\begin{equation} 
J^w_{p,\lambda} = \inf_{ u\in W_{0}^{1,p}(\Omega;W_p(\delta))\setminus\{0\} } 
\chi^w_{p,\lambda} (u), \label{J}
\end{equation}
where
\begin{equation}  \chi^w_{p,\lambda} (u)=\frac{ \int_\Omega |\nabla u(x)|^p W_p(\delta(x))\,dx -\lambda \int_\Omega|u(x)|^p W_p(\delta(x))\,dx }{\int_\Omega |u(x)|^p W_p(\delta(x)) 
/F_{\eta_0}(\delta(x))^pdx }.\label{chi}
\end{equation}
Here $W_A({\R}_+)=P_A ({\R}_+)\cup Q_A({\R}_+)$ is a subclass of $W(\R_+)$ defined by  Definition \ref{admissibility}
and $\eta_0$ is a sufficiently small positive  number such  that the Hardy inequalities in Theorem \ref{CT3}  and Corollary \ref{NCC1} are  valid. Note  that $ J^w_{p,{0}}$ gives  the  best constant in (\ref{HI}),   the  function $\lambda\mapsto  J^w_{p,\lambda}$ is  non-increasing on $\R$ and $ J^w_{p,\lambda} \to -\infty$  as $\lambda\to \infty$. 
\par
When $p=2$ and $w(t)=1$,   this variational problem   (\ref{J})  was originally studied in \cite{BM}. Then, the problem
  (\ref{J}) was intensively  studied in  \cite{ah3} in the case  that  $1<p<\infty$ 
and $w(t)=t^{\alpha p/(p-1)}\in Q_A(\R_+)$ with $\alpha<1-1/p$. 
In this paper we  further investigate {the variational problem (\ref{J}) with  non-doubling weight }functions
 $w(t)\in W_A(\R_+)$ and  we make clear the attainability of the infimum $ J^w_{p,\lambda}$ as Theorem \ref{Main} and Theorem \ref{p=2}.
\par\medskip

 This  paper is  organized in  the  following way: In Subsection 2.1 we introduce a  class of {weight} functions  $W(\mathbf R_+)$ and two subclasses $P(\mathbf R_+)$  and $ Q(\mathbf R_+)$ together with 
so-called Hardy functions, which are crucial  in this  paper. Further a notion of  admissibilities for $P(\mathbf R_+)$  and $ Q(\mathbf R_+)$ is  introduced. In  Subsection 2.2, we recall the weighted Hardy's inequalities in \cite{H} which are crucial in this work.
In Section 3, the main  results are described.  
Theorem \ref{Main} and Theorem \ref{p=2} are established  in   Section 4 and  Section 5  respectively.

\section{Preliminaries }
\subsection{Weight functions }

First we  introduce a class of weight functions according to \cite{H} which  is crucial in this paper.
\begin{df}\label{D0}
 Let us  set ${\R}_+= (0,\infty)$ and 
  \begin{equation}
W({\R}_+)=\{ w(t)\in C^1({\R}_+): w(t)>0, \lim_{t\to+0}w(t)=a \ \text{for some} \ a\in [0,\infty] \}.\end{equation}
\end{df}
In  the  next  we define two subclasses of  $W({\R}_+)$.

\begin{df}\label{D1} Let  us  set 
 \begin{align} 
P({\R}_+)&= \{ w(t)\in W({\R}_+) : \,  w(t)^{-1} \notin L^1((0,\eta)) \ \text{for some} \ \eta >0\},
\\[0.5ex]
Q({\R}_+)&=\{ w(t)\in W({\R}_+) :  \, w(t)^{-1}\in L^1((0,\eta)) \ \text{for any} \ \eta >0 \}.\end{align}
\end{df}
Here we give   fundamental examples:
\begin{exam} 
\begin{enumerate}
\item 
$t^{\a } \in P(\mathbf R_+)$ if $\a \ge 1$  and 
$t^{\a } \in Q(\mathbf R_+)$ if $\a < 1$.
\item $ e^{-1/t}\in P(\mathbf R_+)$ and $ e^{1/t}\in Q(\mathbf R_+)$. 
\item For $ \a\in \mathbf R$, $ t^\a e^{-1/t}\in P(\mathbf R_+)$ and $t^\a e^{1/t}\in Q(\mathbf R_+)$.
\end{enumerate}
\end{exam}

\begin{rem} \begin{enumerate}
\item  $W({\R}_+)= P({\R}_+)\cup Q({\R}_+)$ and $P({\R}_+)\cap Q({\R}_+)= \emptyset$ hold.
\item If  $w(t)^{-1} \notin L^1((0,\eta))$ for some $\eta >0$, 
then  $w(t)^{-1} \notin L^1((0,\eta))$ for any $\eta >0$. 
Similarly if $w(t)^{-1} \in L^1((0,\eta))$ for some $\eta >0$, 
then  $w(t)^{-1} \in L^1((0,\eta))$ for any $\eta >0$.
\item
If  $w(t)\in  P({\R}_+)$, then $\lim_{t\to +0}w(t)=0$. Hence 
by  setting $w(0)=0$, $w(t)$ is uniquely  extended to  a continuous function on $[0,\infty)$. 
On the other hand  if $w(t)\in  Q({\R}_+)$, then    possibly  $\lim_{t\to +0}w(t)=+\infty$.
\end{enumerate}
\end{rem}

In the next we define  {functions such as $F_{\eta}(t)$ and $G_{\eta}(t)$ in order to introduce  variants of the Hardy potential like
 $F_{\eta_0}(\delta(x))^{-p}$ in (\ref{HI}).}

\begin{df}\label{D2} Let $\mu>0$ and   $\eta>0$. 
For $w(t) \in W({\R}_+)$, 
we  define the followings:
 \begin{enumerate}
 \item  When $w(t)\in P({\R}_+)$, 
   \begin{align}
 F_{\eta}(t; w,\mu) & = \begin{cases} 
w(t)\left( \mu+ \int_t^\eta {w(s)}^{-1}\, {ds}\right) &  \text{ if } \ t\in (0,\eta), \\ 
w(\eta) \mu &  \text{ if } \ t\ge  \eta, 
 \end{cases}\\
 G_\eta(t;w,\mu) &= \begin{cases} 
\mu+  \int_t^\eta  {F_\eta(s;w,\mu)}^{-1}\, {ds} &  \text{ if } \ t\in (0,\eta),\\
\mu & \text{ if } \ t\ge \eta.
 \end{cases}\label{PG}
\end{align}

\item When $w(t)\in Q({\R}_+)$, 
 \begin{align}
 F_{\eta}(t; w) &= 
\begin{cases} w(t)\int_0^t {w(s)}^{-1} \,{ds} & \text{ if } \ t\in (0,\eta), \\ 
 w(\eta) \int_0^\eta w(s)^{-1} \,ds & \text{ if } \ t\ge \eta, 
\end{cases}\\
  G_\eta(t;w,\mu)&= 
\begin{cases} \mu+  \int_t^\eta  {F_\eta(s;w)}^{-1}\, {ds} & \text{ if } \ t\in (0,\eta), \\
\mu & \text{ if } \ t\ge \eta.
 \end{cases}
\end{align}
\item $F_{\eta}(t;w,\mu)$ and $F_{\eta}(t;w)$ are abbreviated as $F_{\eta}(t)$.
$ G_{\eta}(t; w,\mu)$ is abbreviated as  $ G_{\eta}(t)$.
 \item  For   $ w(t)\in P({\R}_+)$  or $Q({\R}_+)$, we  define
 \begin{equation}  W_p(t)= w(t)^{p-1}. \label{1.4}\end{equation}
\end{enumerate}
\end{df}

\begin{rem}\label{defofG} In the definition (\ref{PG}), one can replace $G_\eta(t; w,\mu)$ with the more  general 
$G_\eta(t; w,\mu,\mu')= \mu'+  \int_t^\eta  {F_\eta(s;w,\mu)}^{-1}ds$ if $t\in (0,\eta)$,  
$G_\eta(t; w,\mu,\mu')=\mu'$ if $t\ge \eta$ with $\mu'>0$. 
However, for simplicity  this  paper uses  (\ref{PG}).  
\end{rem}


Here we give fundamental examples:

\begin{exam} Let $w(t)=t^{\a } $ for $\a\in \R$. 
\begin{enumerate}
\item When $\a>1$, $F_\eta(t)=t/(\a -1) \,$   and 
$G_\eta(t)=\mu +(\a -1)\log(\eta/t)$ for $t\in(0,\eta)$ 
provided that  $ \mu= \eta^{1-\a }/(\a -1)$.
\item When $\a=1$,  $F_\eta(t)=t(\mu+ \log(\eta/t)) \,$   and 
$G_\eta(t)=\mu -\log \mu +\log\left( \mu+ \log(\eta/t)\right)$ for $t\in(0,\eta)$.
\item When $\a<1$, $F_\eta(t)=t/(1-\a ) \,$   and 
$G_\eta(t)=\mu +(1-\a )\log(\eta/t)$ for $t\in(0,\eta)$.
\end{enumerate}
\end{exam}
By using integration by parts  we  see  the followings: 
\begin{exam}
\begin{enumerate}
\item When either $w(t)=e^{-1/t} \in P(\R_+)$ or $w(t)=e^{1/t}\in Q(\R_+)$, 
we have $F_\eta(t)= O(t^2)$ as $t\to +0$.
\item Moreover,
if $w(t)= \exp({\pm t^{-\a}}) $ with $\a> 0$, then $F_\eta(t)= O(t^{\a+1})$ as $t\to +0$. 
{ In fact, it  holds  that $\lim_{t\to +0} F_\eta(t)/t^{\a+1}=1/\a$. }
\end{enumerate}
\end{exam}

In a similar way  we define the following:
\begin{df}\label{fg} Let $\mu>0$ and   $\eta>0$. 
For $w(t) \in W({\R}_+)$, 
we  define the followings:
 \begin{enumerate}
 \item When $w(t)\in P({\R}_+)$,
  \begin{align}
f_{\eta}(t; w,\mu) &= 
\begin{cases} \mu+ \int_t^\eta {w(s)}^{-1}\, {ds} & \text{ if } \ t\in (0,\eta),\\ 
 \mu &  \text{ if } \ t\ge \eta.
\end{cases}
\end{align}
 \item When $w(t)\in Q({\R}_+)$,
  \begin{align}
f_{\eta}(t; w) &= 
\begin{cases}
\int_0^t {w(s)}^{-1} \,{ds} &    \text{ if } \ t\in (0,\eta),\\
\int_0^\eta w(s)^{-1}\,ds &  \text{ if } \ t\ge \eta.
\end{cases}
\end{align}
\item $f_{\eta}(t;w,\mu)$ and $f_{\eta}(t;w)$ are abbreviated as $f_{\eta}(t)$.
\end{enumerate}
\end{df}
\begin{rem}\label{remark2.2}
\begin{enumerate}\item
We note that for  $t\in(0,\eta)$
\begin{equation}\label{2.13}\begin{cases}
 \frac {d}{dt}\log f_{\eta}(t)= -{F_\eta(t) }^{-1} & \text{ if } \ w(t)\in P({\R}_+),\\ 
\frac {d}{dt}\log f_{\eta}(t)={F_\eta(t) }^{-1} & \  \mbox{ if } \ w(t)\in Q({\R}_+),\\
\frac {d}{dt}\log G_{\eta}(t)= -({F_\eta(t) G_{\eta}(t)})^{-1},
\\
\frac {d}{dt}G_{\eta}(t)^{-1}= ({F_\eta(t) G_{\eta}(t)^2})^{-1} 
& \ \mbox{ if } \ w(t)\in W({\mathbf R}_+).
\end{cases}
\end{equation}
By Definition \ref{D2}, Definition \ref{fg}  
and (\ref{2.13}), we  see that $F_\eta(t)^{-1}\notin L^1((0,\eta))$, 
$\lim_{t\to+0}G_\eta(t)=\infty$ and $( F_\eta (t)G_\eta(t))^{-1}\notin L^1((0,\eta))$,
but $ (F_\eta(t) G_\eta(t)^2 )^{-1}\in L^1((0,\eta))$.   
\item
If $w(t)\in W(\R_+)$, 
then  we  have $\liminf_{t\to +0}F_\eta(t)=\liminf_{t\to+0}F_\eta(t)G_\eta(t)=0$
from 1. 
\end{enumerate}
\end{rem}
\begin{exam}
If either $w(t)=t^2e^{-1/t}\in P(\mathbf R_+)$ or $w(t)=t^2e^{1/t}\in Q(\mathbf R_+)$, 
then $F_{\eta}(t)=O(t^2)$ and  $G_{\eta}(t)=O(1/t)$ as $t\to +0$.
\end{exam}
Now  we introduce  two admissibilities for $P({\R}_+)$ and  $Q({\R}_+)$.
\begin{df} \begin{enumerate}\item
A function $w(t)\in P({\R}_+)$ is said  to be admissible if there exist  positive  numbers $\eta$  and $K$ such  that we  have 
 \begin{equation}\label{cond1}
 \int_t^\eta w(s)^{-1} \,ds \le e^{K/\sqrt t} \qquad\text{for} \ \ \ t\in(0,\eta).
 \end{equation}
 \item 
 A function $w(t)\in Q({\R}_+)$ is said  to be admissible if there exist  positive  numbers $\eta$  and $K$ such  that we  have 
\begin{equation}\label{cond2}
 \int_0^t w(s)^{-1} \,ds \ge e^{-K/\sqrt t} \qquad\text{for} \ \ \ t\in(0,\eta).
 \end{equation}
 \end{enumerate}
  \end{df}
\begin{df}  \label{admissibility}
By $P_A ({\R}_+)$  and $ Q_A({\R}_+)$ we  denote  the  set of  all  admissible  functions  in $P({\R}_+)$ and $ Q({\R}_+)$ respectively. We  set \begin{equation}W_A({\R}_+)= P_A ({\R}_+)\cup Q_A({\R}_+).\end{equation}
\end{df}
\begin{rem}
If   $w(t)\in W_A (\mathbf R_+)$, then 
there exist positive numbers $\eta$ and $K$ such  that we  have 
\begin{equation}
\sqrt t \,G_\eta (t) \le K\qquad\text{for} \ \ \ t\in (0,\eta).\label{2.15}
\end{equation}
{For the detail, see Proposition 2.1 in \cite{H}.}
\end{rem}
Here we give  typical examples:
\begin{exam} $e^{-1/t}\notin P_A(\mathbf  R_+)$, $e^{1/t}\notin Q_A(\mathbf  R_+)$, but $e^{-1/\sqrt{t}}\in P_A(\mathbf  R_+)$, $e^{1/\sqrt{t}}\in Q_A(\mathbf  R_+)$.\par\medskip
\par\noindent
{\bf Verifications: } \par
\noindent  $e^{-1/t} \notin  P_A(\R_+):$ For  small $t>0$, we  have $\int_t^\eta e^{1/s} \,ds \ge \int_t^{2t} e^{1/s} \,ds\ge te^{1/(2t)}$.
But this contradicts to (\ref{cond1}) for  any $K>0$. \par\noindent
{ $e^{-1/\sqrt t} \in  P_A(\R_+) :$}
Since $e^{1/\sqrt s}\le e^{1/\sqrt t}\, (t<s<\eta)$, we  have $\int_t^\eta e^{1/\sqrt s}\,ds \le \eta  e^{1/{\sqrt t}}\le e^{K/\sqrt t}$ for  some $K>1$. 
\par\noindent $e^{-1/t} \notin  Q_A(\R_+):$
For $0<s\le t$, we  have $\int_0^t e^{-1/s}\,ds\le t e^{-1/t}$. But this contradicts to (\ref{cond2}) for  any $K>0$.
 \par\noindent
{$e^{-1/\sqrt t} \in Q_A(\R_+):$} For $t/2<s<t$, we  have 
$\int_0^t e^{-1/\sqrt s}\,ds$ $\ge \int_{t/2}^t e^{-1/\sqrt s}\,ds$ 
$\ge (t/2) e^{-\sqrt{ 2/t}}$ $\ge e^{-K/\sqrt t}$ for some $ K>\sqrt 2$.
\end{exam}

\subsection{ Weighted Hardy's inequalities } 
We define a switching function.
\begin{df} {\rm (Switching function)}  
For $w(t)\in W({\R}_+)= P({\R}_+) \cup Q({\R}_+)$ we  set
\begin{equation} s(w)= \begin{cases} -1 \quad & \text{if } \quad w(t)\in P({\R}_+),\\
\,\, 1\quad & \text{if } \quad w(t)\in Q({\R}_+).\end{cases}
\end{equation}
\end{df}

Let $\Omega$ be 
{
a bounded domain of class $C^2$ in ${\R}^N$. 
}
Let $\delta(x)=\text{dist}(x,\partial\Omega)$. 
For each  small $\eta >0$, $\Omega _\eta $  and $\Sigma_\eta$ denote a tubular neighborhood of $\partial \Omega$ and $\partial(\Omega\setminus \Omega_\eta)$ respectively, namely 
\begin{equation}
\Omega_\eta = \{ x\in \Omega:  \delta(x)<\eta \}\quad\mbox{and} \quad  \Sigma_\eta = \{ x\in \Omega:  \delta(x)=\eta \} .\label{NBD}
\end{equation}

In \cite{H} we established a series of weighted Hardy's inequalities with sharp remainders. 
In particular,  
{
we have the following inequality from Theorem 3.3 in \cite{H} by noting that 
$F_\eta(t)\le F_{\eta_0}(t)$ for $\eta\in(0,\eta_0]$ and $t\in(0,\eta)$.
}

\begin{thm}\label{CT3} 
Assume  that  $\Omega$  is  a  bounded domain of  class $C^2$ in ${\R}^N$.
Assume that $1<p<\infty$ and   $ w(t)\in W_A({\R}_+)$. 
Assume  that $\mu>0$  and  $\eta_0 $  is  a sufficiently  small positive  number. 
Then, 
{
for  $\eta\in (0,\eta_0]$
} 
there exist  positive  numbers 
{
$C=C(w,p,\eta,\mu)$ and $L'=L'(w,p,\eta,\mu)$ 
}
such  that 
{
for $u(x)\in W_0^{1,p}(\Omega;W_p(\delta))$ we have 
}
\begin{align}\label{2.11}
&\int_{\Omega_\eta} 
\left( |\nabla u(x)|^p  -\Lambda_{p} \frac{|u(x)|^p}{F_{\eta_0}(\delta(x))^p}  \right)W_p(\delta(x))\,dx \nonumber \\ 
&\quad
\ge C \int_{\Omega_\eta}  \frac{|u(x)|^p {W_p(\delta(x))}}{F_{\eta}(\delta(x))^p G_{\eta}(\delta(x))^2}\,dx + s(w)L' \int_{\Sigma_\eta} |u(x)|^p W_p(\eta)\,d\sigma_\eta,
\end{align}
where $d\sigma_\eta$  denotes  surface elements on  $\Sigma_\eta$.
\end{thm}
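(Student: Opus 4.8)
The plan is to localize the inequality to the tubular neighbourhood $\Omega_\eta$, reduce it to a one-dimensional weighted inequality in the normal variable, and prove that one-dimensional inequality by an exact ground-state substitution followed by a second-order (missing-term) improvement; the $N$-dimensional statement is then recovered by integrating over $\partial\Omega$. First I would introduce Fermi coordinates $(\delta,\sigma)$ with $\sigma\in\partial\Omega$, which are available because $\Omega$ is of class $C^2$: for $\eta_0$ small, $\delta$ is $C^2$ on $\Omega_{\eta_0}$ with $|\nabla\delta|=1$, the volume element is $dx=J(\delta,\sigma)\,d\delta\,d\sigma$ with $J=1+O(\delta)$, and $\Delta\delta$ is bounded by the principal curvatures of $\partial\Omega$. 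Since $|\nabla u|^p\ge|\partial_\delta u|^p$, discarding the tangential gradient already gives a lower bound and, for each fixed $\sigma$, leaves a one-dimensional integral in $\delta\in(0,\eta)$ against the weight $W_p(\delta)$. The deviation of $J$ from $1$ and the curvature term produce error integrals that scale like $\delta$ times the Hardy potential; controlling these is exactly where $\eta_0$ must be taken small.

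The crucial point for the one-dimensional step is that $g(t)=f_\eta(t)^{1-1/p}$ is an \emph{exact} positive solution of the Euler--Lagrange equation $-\bigl(W_p|g'|^{p-2}g'\bigr)'=\Lambda_{p}\,W_p F_\eta^{-p}\,g^{p-1}$ on $(0,\eta)$. Indeed, using $F_\eta=w f_\eta$ and $\tfrac{d}{dt}\log f_\eta=s(w)F_\eta^{-1}$ from (\ref{2.13}), one computes $W_p|g'|^{p-2}g'=s(w)(1-1/p)^{p-1}f_\eta^{-(p-1)/p}$, whose $t$-derivative is precisely $-\Lambda_{p}W_pF_\eta^{-p}g^{p-1}$ with $\Lambda_{p}=(1-1/p)^p$. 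I would then set $u=g\,v$ and apply the convexity inequality $|X+Y|^p\ge|X|^p+p|X|^{p-2}X\cdot Y$ with $X=vg'$ and $Y=gv'$. Integrating the resulting cross term by parts and invoking the equation for $g$ reproduces exactly $\Lambda_{p}\int W_pF_\eta^{-p}|u|^p$, while the endpoint $\delta=\eta$ yields the flux $W_p|g'|^{p-2}g'$ there, of sign $s(w)$ (since $g'>0$ for $w\in Q(\R_+)$ and $g'<0$ for $w\in P(\R_+)$); this is precisely the boundary integral over $\Sigma_\eta$. For $u\in C_c^\infty(\Omega)$, which is dense and vanishes near $\partial\Omega$, the endpoint $\delta=0$ contributes nothing.

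After this substitution there remains a nonnegative weighted energy in $v$, and the factor $G_\eta^{-2}$ comes from a second Hardy-type inequality applied to it, with $G_\eta$ playing the role of an iterated logarithm. A further ground-state substitution with $G_\eta^{1-1/p}$---natural because $\tfrac{d}{dt}\log G_\eta=-(F_\eta G_\eta)^{-1}$ by (\ref{2.13})---produces a positive constant $C$ times $\int W_pF_\eta^{-p}G_\eta^{-2}|u|^p$. The integrability dichotomy recorded in Remark \ref{remark2.2}, namely that $(F_\eta G_\eta)^{-1}\notin L^1((0,\eta))$ while $(F_\eta G_\eta^2)^{-1}\in L^1((0,\eta))$, confirms that $C\,W_pF_\eta^{-p}G_\eta^{-2}$ is the correct borderline remainder.

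The \textbf{main obstacle} is the non-doubling nature of $w$: the classical proofs rely on homogeneity or the doubling property (\ref{doubling}), both of which fail when $w$ vanishes or blows up in infinite order. The admissibility bound (\ref{2.15}), $\sqrt t\,G_\eta(t)\le K$, is the substitute: it controls the size of $G_\eta$, hence of the curvature- and Jacobian-induced error integrals relative to the positive remainder, so that they can be absorbed for $\eta_0$ small. A secondary difficulty is that for $p\ne2$ the convexity remainder must be estimated in two regimes ($\gtrsim|Y|^p$ for $p\ge2$, and $\gtrsim|Y|^2(|X|+|Y|)^{p-2}$ for $1<p<2$) before it can be matched to the $G_\eta^{-2}$ term. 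Finally, the displayed statement follows at once from the sharper inequality of \cite{H} (Theorem~3.3), in which the subtracted potential carries $F_\eta$ rather than $F_{\eta_0}$: since $F_\eta(t)\le F_{\eta_0}(t)$ gives $F_{\eta_0}^{-p}\le F_\eta^{-p}$, replacing $F_\eta$ by $F_{\eta_0}$ only increases the left-hand side, so the version with $F_{\eta_0}$ holds a fortiori.
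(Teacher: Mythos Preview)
Your final paragraph is precisely the paper's proof: Theorem~\ref{CT3} is not established from scratch here but is quoted from Theorem~3.3 of \cite{H}, together with the observation that $F_\eta(t)\le F_{\eta_0}(t)$ for $\eta\in(0,\eta_0]$ and $t\in(0,\eta)$, so that replacing $F_\eta^{-p}$ by $F_{\eta_0}^{-p}$ in the subtracted term only increases the left-hand side. The paper adds one step you mention only in passing: the result in \cite{H} is stated for $u\in W_0^{1,p}(\Omega;W_p(\delta))\cap C(\Omega)$, and the extension to all of $W_0^{1,p}(\Omega;W_p(\delta))$ uses the trace compactness of Lemma~\ref{lemtrace} and Remark~\ref{rem4.1}, since the right-hand side of (\ref{2.11}) carries a surface integral over $\Sigma_\eta$ that must pass to the limit under approximation by $C_c^\infty(\Omega)$.

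Everything preceding your last paragraph---the Fermi coordinates, the ground-state substitution $u=f_\eta^{1-1/p}v$ and the verification that $g=f_\eta^{1-1/p}$ solves the Euler--Lagrange equation exactly, the convexity remainder, the second substitution with $G_\eta$, and the use of the admissibility bound (\ref{2.15}) to absorb the curvature and Jacobian errors---is a correct outline of how Theorem~3.3 of \cite{H} itself is proved. The present paper does not reproduce that argument, so your sketch goes beyond what is needed here; but it is the right architecture, and your identification of where admissibility enters (controlling $\sqrt{t}\,G_\eta(t)$ so that the $O(\delta)$ geometric errors can be dominated by the $G_\eta^{-2}$ remainder) is on target.
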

{
Similarly we have the following inequality from Corollary 3.3 in \cite{H}.
}
\begin{cor}\label{NCC1}
Assume  that  $\Omega$  is  a  bounded domain of  class $C^2$ in ${\R}^N$.
Assume that $1<p<\infty$ and   $ w(t)\in W_A({\R}_+)$. Assume  that $\mu>0$ and  $\eta_0 $  is  a sufficiently  small positive  number. 
Then, 
{
for $\eta\in (0,\eta_0]$
}
there exist positive numbers 
{
$\gamma=\gamma(w,p,\eta,\mu)$ and $ L'=L'(w,p,\eta,\mu)$ 
}
such  that 
{
for $u(x)\in W_{0}^{1,p}(\Omega;W_p(\delta))$ we have
}
\begin{equation}\label{2.7}
\int_{\Omega} \left( |\nabla u(x)|^p  -\gamma\frac{|u(x)|^p}{F_{\eta}(\delta(x))^p} \right)W_p(\delta(x))\,dx
\ge  s(w) L' \int_{\Sigma_\eta} |u(x)|^pW_p(\eta)\,d\sigma_\eta,
\end{equation}
where $d\sigma_\eta$  denotes  surface elements on  $\Sigma_\eta$.
\end{cor}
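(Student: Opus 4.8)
The plan is to derive Corollary \ref{NCC1} directly from Theorem \ref{CT3} by combining the Hardy inequality on the tubular neighborhood $\Omega_\eta$ with an elementary estimate on the interior region $\Omega\setminus\Omega_\eta$, where the weight $\delta(x)$ is bounded away from zero and no singularity occurs. The essential point is that the strong inequality \eqref{2.11} on $\Omega_\eta$ carries a favorable remainder term $C\int_{\Omega_\eta}|u|^p W_p(\delta)/(F_\eta^p G_\eta^2)\,dx$, and the first task is to absorb the difference between the potentials $F_{\eta_0}(\delta)^{-p}$ and $F_\eta(\delta)^{-p}$ into this remainder, thereby replacing the constant $\Lambda_p$ by a slightly smaller $\gamma>0$ throughout $\Omega_\eta$.

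First I would split the integral $\int_\Omega|\nabla u|^p W_p(\delta)\,dx = \int_{\Omega_\eta}(\cdots) + \int_{\Omega\setminus\Omega_\eta}(\cdots)$. On $\Omega_\eta$, I apply Theorem \ref{CT3}. The key algebraic step is to write
\begin{equation}
\gamma\,F_\eta(t)^{-p} \le \Lambda_p\,F_{\eta_0}(t)^{-p} + C\,\big(F_\eta(t)^p G_\eta(t)^2\big)^{-1}
\end{equation}
for $t\in(0,\eta)$ with a suitable $\gamma<\Lambda_p$; since $F_\eta(t)\le F_{\eta_0}(t)$ the first two potentials are comparable, and because $\lim_{t\to+0}G_\eta(t)=\infty$ (Remark \ref{remark2.2}) the quantity $F_\eta(t)^{-p}G_\eta(t)^{-2}$ dominates near the boundary relative to the loss incurred by shrinking the constant. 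Choosing $\gamma$ close enough to $\Lambda_p$ and using the positivity of $C$ lets me convert \eqref{2.11} into the desired pointwise bound on $\Omega_\eta$, producing exactly the term $-\gamma|u|^pW_p(\delta)/F_\eta(\delta)^p$ together with the boundary term $s(w)L'\int_{\Sigma_\eta}|u|^pW_p(\eta)\,d\sigma_\eta$.

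On the interior region $\Omega\setminus\Omega_\eta$, I note that $\delta(x)\ge\eta$, so $F_\eta(\delta(x))=w(\eta)\mu$ (in the $P$ case) or $F_\eta(\delta(x))=w(\eta)\int_0^\eta w^{-1}$ (in the $Q$ case) is a fixed positive constant, and the weight $W_p(\delta(x))$ is bounded above and below by positive constants there. Hence $\int_{\Omega\setminus\Omega_\eta}\gamma|u|^pW_p(\delta)/F_\eta(\delta)^p\,dx \le C'\int_{\Omega\setminus\Omega_\eta}|u|^p\,dx$, which is controlled by the full gradient norm via a standard Poincaré-type estimate on the interior; after possibly further reducing $\gamma$, the interior contribution of the potential term is absorbed by $\int_{\Omega\setminus\Omega_\eta}|\nabla u|^pW_p(\delta)\,dx\ge 0$. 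Adding the two regions and discarding the nonnegative interior gradient integral yields \eqref{2.7}.

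The main obstacle I anticipate is the matching of the two potential functions across the interface $\Sigma_\eta$ while keeping a single constant $\gamma$ valid on all of $\Omega$: one must verify that the value of $\gamma$ forced by the absorption estimate on $\Omega_\eta$ is compatible with the crude interior bound, and that the boundary term $s(w)L'\int_{\Sigma_\eta}|u|^p W_p(\eta)\,d\sigma_\eta$ produced by Theorem \ref{CT3} survives unchanged. Since the density $C_c^\infty(\Omega)$ is used to define $W_0^{1,p}(\Omega;W_p(\delta))$, I would first prove \eqref{2.7} for $u\in C_c^\infty(\Omega)$ and then pass to the limit by the continuity of all terms in the norm \eqref{2.2}, taking care that the trace integral over $\Sigma_\eta$ is continuous under this completion because $\Sigma_\eta$ sits in the interior where the weight is nondegenerate.
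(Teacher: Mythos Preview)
The paper does not actually derive Corollary~\ref{NCC1} from Theorem~\ref{CT3}; it imports the inequality directly from Corollary~3.3 of \cite{H}, and the only argument supplied in this paper is the density step (the Remark following the Corollary, invoking Lemma~\ref{lemtrace} and Remark~\ref{rem4.1}) extending the estimate from $C_c^\infty(\Omega)$ to all of $W_0^{1,p}(\Omega;W_p(\delta))$. Your attempt at a self-contained derivation is therefore a genuinely different route. On $\Omega_\eta$ it is essentially sound, though the remainder term is not really needed there: since $F_\eta(t)/F_{\eta_0}(t)$ is bounded below by a positive constant on $(0,\eta]$ --- and in fact equals $1$ when $w\in Q({\R}_+)$ --- one may simply take any $\gamma\le \Lambda_p\inf_{(0,\eta]}(F_\eta/F_{\eta_0})^p$, without appealing to the $G_\eta^{-2}$ term.

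The real gap is on the interior $\Omega\setminus\Omega_\eta$. You claim that the potential term there ``is controlled by the full gradient norm via a standard Poincar\'e-type estimate on the interior'' and is then ``absorbed by $\int_{\Omega\setminus\Omega_\eta}|\nabla u|^pW_p(\delta)\,dx\ge 0$''. But no Poincar\'e inequality of the form $\int_{\Omega\setminus\Omega_\eta}|u|^p\le C\int_{\Omega\setminus\Omega_\eta}|\nabla u|^p$ is available, because $u$ need not vanish on $\Sigma_\eta$: take $u\in C_c^\infty(\Omega)$ identically equal to $1$ on $\Omega\setminus\Omega_\eta$. What \emph{does} hold is a Poincar\'e--Friedrichs inequality with a trace term,
\[
\int_{\Omega\setminus\Omega_\eta}|u|^p\,dx \ \le\ C_1\int_{\Omega\setminus\Omega_\eta}|\nabla u|^p\,dx \ +\ C_2\int_{\Sigma_\eta}|u|^p\,d\sigma_\eta,
\]
and this additional surface integral must then be absorbed by \emph{modifying} the constant $L'$ (enlarging it when $s(w)=-1$, shrinking it when $s(w)=+1$, and taking $\gamma$ small enough to keep $L'$ positive in the latter case). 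This is legitimate, since the Corollary only asserts the existence of \emph{some} positive $L'=L'(w,p,\eta,\mu)$, not the same $L'$ as in Theorem~\ref{CT3}; but it directly contradicts your stated expectation that the boundary term ``survives unchanged''. Once you allow $L'$ to change, your argument can be completed.
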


{
\begin{rem}
In Theorem 3.3 and Corollary 3.3 in \cite{H}, it was assumed that 
$u(x)\in W_{0}^{1,p}(\Omega;W_p(\delta))\cap C(\Omega)$. 
However, since we have the inequalities (\ref{2.11}) and (\ref{2.7}) 
for $u(x)\in C_c^\infty(\Omega)$, 
by Lemma \ref{lemtrace} and Remark \ref{rem4.1} as stated later, 
we see that the inequalities (\ref{2.11}) and (\ref{2.7}) hold 
for $u(x)\in W_{0}^{1,p}(\Omega;W_p(\delta))$. 
Therefore we have Theorem \ref{CT3} and Corollary \ref{NCC1}.
\end{rem}
}

\begin{rem} 
These inequalities  are closely related to the  weighted Hardy-Sobolev inequalities with sharp remainder terms (cf. \cite{AH}, \cite{AHN}, \cite{BM}, \cite{CS}, \cite{DHA2}, \cite{AHL}, \cite{M}). 
\end{rem}

\section{Main results}

Let $\eta_0$ be a sufficiently small positive  number 
such  that the Hardy's inequalities in Theorem \ref{CT3}  and Corollary \ref{NCC1} are  valid. 
Let $w(t)\in W({\R}_+)$ and $W_p(t)= w(t)^{p-1}$ with $1<p<\infty$. 
Moreover, we assume that 
\begin{equation}\label{mono}
w'(t)\ge 0 \quad\text{for all} \ \ t\in(0,\eta_0)\quad\text{or}\quad 
w'(t)\le 0 \quad\text{for all} \ \ t\in(0,\eta_0). 
\end{equation}
Then we have the following. 
\begin{lem}\label{lem lim F=0}
Assume that $w(t)\in W(\R_+)$ satisfies (\ref{mono}). Then it holds that 
\begin{equation}\label{lim F=0}
\lim_{t\to+0}F_{\eta_0}(t)=0.
\end{equation}
In particular, $F_{\eta_0}(t)$ is bounded in $\R_+$.
\end{lem}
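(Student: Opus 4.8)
\emph{Proof plan.} The plan is to treat the two classes $P(\R_+)$ and $Q(\R_+)$ separately, working directly from the formulas for $F_{\eta_0}$ in Definition~\ref{D2} and using the hypothesis (\ref{mono}) to pin down which of the two monotonicity alternatives can actually occur in each class. Throughout I would fix $t\in(0,\eta_0)$ and write $F_{\eta_0}(t)=w(t)f_{\eta_0}(t)$ with $f_{\eta_0}$ as in Definition~\ref{fg}, and I would repeatedly use $F_{\eta_0}>0$.

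First, suppose $w\in P(\R_+)$, so that $\lim_{t\to+0}w(t)=0$. If $w$ were non-increasing on $(0,\eta_0)$, then letting $t\to+0$ in $w(t)\ge w(s)$ (for $t<s$) would force $w(s)\le0$, a contradiction; hence (\ref{mono}) can only hold in the form $w'\ge0$, i.e.\ $w$ is non-decreasing. Then
$F_{\eta_0}(t)=\mu\,w(t)+w(t)\int_t^{\eta_0}w(s)^{-1}\,ds$
is a genuine $0\cdot\infty$ indeterminate form, since $w(t)\to0$ while $\int_t^{\eta_0}w^{-1}\to+\infty$. The decisive step is to split the integral at an auxiliary level $\tau\in(0,\eta_0)$: for $0<t<\tau$,
\[
w(t)\int_t^{\eta_0}\frac{ds}{w(s)}=w(t)\int_t^{\tau}\frac{ds}{w(s)}+w(t)\int_{\tau}^{\eta_0}\frac{ds}{w(s)}.
\]
On the first piece the monotonicity of $w$ gives $w(s)^{-1}\le w(t)^{-1}$ for $s\ge t$, whence it is bounded by $\tau-t\le\tau$; on the second piece $\int_{\tau}^{\eta_0}w^{-1}$ is a fixed finite constant and $w(t)\to0$, so that piece tends to $0$. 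Together with $\mu\,w(t)\to0$, this gives $\limsup_{t\to+0}F_{\eta_0}(t)\le\tau$ for every $\tau>0$, and since $F_{\eta_0}\ge0$ we conclude $\lim_{t\to+0}F_{\eta_0}(t)=0$.

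Next, suppose $w\in Q(\R_+)$, so $F_{\eta_0}(t)=w(t)\int_0^t w(s)^{-1}\,ds$ with $\int_0^t w^{-1}\to0$. If $\lim_{t\to+0}w(t)=a<\infty$, then $w$ is bounded near $0$, say $w\le M$, and the conclusion is immediate from $F_{\eta_0}(t)\le M\int_0^t w^{-1}\to0$. If instead $a=+\infty$, then $w$ non-decreasing would force $\lim_{t\to+0}w(t)=\inf_{(0,\eta_0)}w<\infty$, so by an analogous feasibility argument (\ref{mono}) can only hold with $w'\le0$, i.e.\ $w$ non-increasing; then $w(s)^{-1}\le w(t)^{-1}$ for $s\le t$ yields the clean bound $F_{\eta_0}(t)\le w(t)\cdot t\,w(t)^{-1}=t\to0$. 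In either case $\lim_{t\to+0}F_{\eta_0}(t)=0$, which is (\ref{lim F=0}).

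Finally, the boundedness of $F_{\eta_0}$ on all of $\R_+$ follows at once: $F_{\eta_0}$ is continuous on $(0,\eta_0]$ and, by (\ref{lim F=0}), extends continuously to $[0,\eta_0]$, hence is bounded there, while on $[\eta_0,\infty)$ it equals the constant given in Definition~\ref{D2}. The only genuine obstacle is the indeterminate $0\cdot\infty$ behaviour in the $P(\R_+)$ case: the naive monotonicity estimate $w(t)\int_t^{\eta_0}w^{-1}\le\eta_0-t$ yields only boundedness, and it is the splitting at the free level $\tau$ — letting $t\to+0$ first and $\tau\to+0$ afterwards — that upgrades the known $\liminf_{t\to+0}F_{\eta_0}=0$ (Remark~\ref{remark2.2}) to the full limit.
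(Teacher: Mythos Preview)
Your proof is correct and follows essentially the same approach as the paper: in the $P(\R_+)$ case you split the integral at an auxiliary level $\tau$ (the paper uses $\varepsilon/2$) and combine the monotonicity bound on the near piece with $w(t)\to0$ on the far piece, and in the $Q(\R_+)$ case you use boundedness of $w$ when the limit $a$ is finite and the estimate $F_{\eta_0}(t)\le t$ when $w$ is non-increasing, exactly as the paper does. The only cosmetic difference is that you organize the $Q(\R_+)$ case by the value of $a$ rather than by the sign of $w'$, but the underlying estimates are identical.
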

The proof of Lemma \ref{lem lim F=0} is stated at the end of this section. 

\medskip
For  $\lambda \in \R$, let  us  recall
the  variational problem   associated with (\ref{HI}): 
\begin{equation} 
J^w_{p,\lambda} = \inf_{ u\in W_{0}^{1,p}(\Omega;W_p(\delta))\setminus\{0\}} 
 \chi^w_{p,\lambda} (u), \label{J2}
\end{equation}
where
\begin{equation*}  \chi^w_{p,\lambda} (u)=\frac{ \int_\Omega |\nabla u(x)|^p W_p(\delta(x)) \,dx-\lambda \int_\Omega|u(x)|^p W_p(\delta(x)) \,dx}{\int_\Omega |u(x)|^p W_p(\delta(x)) /F_{\eta_0}(\delta(x))^pdx }.\label{chi}
\end{equation*}
Our main result is the following:
\begin{thm}\label{Main} 
Assume  that  $\Omega$  is  a  bounded domain of  class $C^2$ in ${\R}^N$. 
Assume  that $1<p<\infty$  and  $w(t)\in W_A({\R}_+)$ satisfies (\ref{mono}). 
Then,  there exists a constant $\lambda^\ast \in \R$ such that:
\begin{enumerate}
\item If  $\lambda \le  \lambda^\ast$,  then $J^w_{p,\lambda} = \Lambda_{p}$. 
If  $\lambda >  \lambda^\ast$,  then $J^w_{p,\lambda} < \Lambda_{p}$.
\end{enumerate}
Here 
\begin{equation}\label{Lambda}
\Lambda _{p} = \left( 1-\frac 1p\right)^{\!p}. 
\end{equation}
Moreover, it holds that:
\begin{enumerate}
\item[2.] If $\lambda<\lambda^\ast$, 
then the infimum $J^w_{p,\lambda}$ in (\ref{J2}) is not attained.
\item[3.] If $\lambda>\lambda^\ast$, 
then the infimum $J^w_{p,\lambda}$ in (\ref{J2}) is attained.
\end{enumerate}
\end{thm}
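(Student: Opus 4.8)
The plan is to follow the Brezis--Marcus variational strategy, organised around the threshold
\[
\lambda^\ast := \sup\{\lambda\in\R : J^w_{p,\lambda}=\Lambda_{p}\}.
\]
Throughout, for the value of $\lambda$ under consideration write $M(u)=\int_\Omega|u|^pW_p(\delta)\,dx$, $D(u)=\int_\Omega|u|^pW_p(\delta)/F_{\eta_0}(\delta)^p\,dx$ and $N(u)=\int_\Omega|\nabla u|^pW_p(\delta)\,dx-\lambda M(u)$, so that $\chi^w_{p,\lambda}(u)=N(u)/D(u)$. Statement 1 will follow once I prove: (a) $J^w_{p,\lambda}\le\Lambda_{p}$ for every $\lambda$; (b) $J^w_{p,\lambda}=\Lambda_{p}$ for all sufficiently negative $\lambda$; and (c) $J^w_{p,\lambda}<\Lambda_{p}$ for all sufficiently large $\lambda$. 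Indeed, for fixed $u$ the quotient $\chi^w_{p,\lambda}(u)$ is affine in $\lambda$, so $\lambda\mapsto J^w_{p,\lambda}$ is concave, hence continuous, and non-increasing; combined with (a)--(c) this makes $\{\lambda:J^w_{p,\lambda}=\Lambda_{p}\}$ a nonempty half-line $(-\infty,\lambda^\ast]$ with $\lambda^\ast$ finite, giving both alternatives of statement 1.

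For (a) I would use the near-extremals of the weighted Hardy inequality. With $f_{\eta_0}$ as in Definition \ref{fg}, Remark \ref{remark2.2} gives $|\nabla f_{\eta_0}(\delta)|=f_{\eta_0}(\delta)/F_{\eta_0}(\delta)$, so $v=f_{\eta_0}(\delta)^{(p-1)/p}$ satisfies $|\nabla v|^p=\Lambda_{p}|v|^p/F_{\eta_0}(\delta)^p$ pointwise and hence $\chi^w_{p,\lambda}(v)=\Lambda_{p}-\lambda\,M(v)/D(v)$. Localising $v$ to a shrinking boundary layer $\Omega_\e$ by a cutoff, one has $M/D\le\sup_{\Omega_\e}F_{\eta_0}^p\to0$ by Lemma \ref{lem lim F=0}, while the cutoff error in the gradient is negligible, so $\chi^w_{p,\lambda}\to\Lambda_{p}$ and (a) holds. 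Statement (c) is immediate: for a fixed $u_0\in C_c^\infty(\Omega)\setminus\{0\}$ one has $\chi^w_{p,\lambda}(u_0)\to-\infty$ as $\lambda\to+\infty$. For (b) I would apply Theorem \ref{CT3} with $\eta=\eta_0$ on $\Omega_{\eta_0}$, where it supplies the full constant $\Lambda_{p}$ plus a nonnegative remainder; on $\Omega\setminus\Omega_{\eta_0}$ both $1/F_{\eta_0}(\delta)^p$ and $W_p(\delta)$ are bounded above and below, so $\Lambda_{p}\int_{\Omega\setminus\Omega_{\eta_0}}|u|^p W_p(\delta)/F_{\eta_0}(\delta)^p\,dx\le C\,M(u)$, and taking $-\lambda$ large absorbs this, yielding $N(u)\ge\Lambda_{p}D(u)$ for all $u$. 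When $w\in P({\R}_+)$ the factor $s(w)=-1$ produces a negative boundary term $-L'\int_{\Sigma_{\eta_0}}|u|^pW_p(\eta_0)\,d\sigma$; I would control this trace over a collar $\{\eta_0<\delta<\eta_0+h\}$ by $C(h^{-1}M(u)+h^{p-1}\int|\nabla u|^pW_p(\delta)\,dx)$, choosing $h$ small so the gradient part is absorbed and the $L^p$ part is again dominated by the large $-\lambda$ term.

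Statement 2 follows from strict monotonicity. If some $u\ne0$ attained $J^w_{p,\lambda}=\Lambda_{p}$ with $\lambda<\lambda^\ast$, then for $\lambda'\in(\lambda,\lambda^\ast]$ one computes $\chi^w_{p,\lambda'}(u)=\Lambda_{p}-(\lambda'-\lambda)M(u)/D(u)<\Lambda_{p}=J^w_{p,\lambda'}$, contradicting that $J^w_{p,\lambda'}$ is the infimum. Hence no minimiser exists for $\lambda<\lambda^\ast$.

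Statement 3, the attainment for $\lambda>\lambda^\ast$ (so $J:=J^w_{p,\lambda}<\Lambda_{p}$), is the heart of the matter and I would prove it by concentration--compactness. Normalise a minimising sequence by $D(u_n)=1$; since $F_{\eta_0}$ is bounded (Lemma \ref{lem lim F=0}), $M(u_n)$ is bounded, hence so is $\int_\Omega|\nabla u_n|^pW_p(\delta)\,dx$, and $u_n\rightharpoonup u$ in $W_0^{1,p}(\Omega;W_p(\delta))$ along a subsequence. On compact subsets of $\Omega$ the weight is comparable to a constant, so a Rellich argument gives strong $L^p$-convergence there, and since $\int_{\Omega_\e}|u_n|^pW_p(\delta)\,dx\le(\sup_{\Omega_\e}F_{\eta_0}^p)D(u_n)\to0$ uniformly in $n$ as $\e\to0$, we obtain $M(u_n)\to M(u)$. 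The only possible loss is the concentration of the denominator density at $\partial\Omega$, namely $\nu:=\lim_{\e\to0}\limsup_n\int_{\Omega_\e}|u_n|^pW_p(\delta)/F_{\eta_0}(\delta)^p\,dx$, with $D(u)+\nu=1$. The decisive step is that this boundary mass costs at least $\Lambda_{p}\nu$ in the numerator: applying Theorem \ref{CT3} to $\zeta_\e u_n$ for a cutoff $\zeta_\e$ supported in $\Omega_\e$ and equal to $1$ near $\partial\Omega$, discarding the nonnegative remainder, and using that the $\lambda$- and surface terms are of lower order where $F_{\eta_0}\to0$, a weighted Brezis--Lieb splitting together with weak lower semicontinuity of the gradient yields $\liminf_n N(u_n)\ge N(u)+\Lambda_{p}\nu$. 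Since $N(u)\ge J\,D(u)$ by definition of $J$, this gives $J\ge J\,D(u)+\Lambda_{p}\nu=J+(\Lambda_{p}-J)\nu$, and $\Lambda_{p}-J>0$ forces $\nu=0$. Then $D(u)=1$, so $u\ne0$; lower semicontinuity gives $N(u)\le\liminf_n N(u_n)=J$ while $N(u)\ge J\,D(u)=J$, so $\chi^w_{p,\lambda}(u)=J$ and $u$ is a minimiser. I expect this last paragraph to be the main obstacle: making the weighted Brezis--Lieb splitting and the boundary-concentration estimate rigorous for non-doubling weights requires careful treatment of the cutoff cross-terms and of the surface integrals on $\Sigma_\e$, and in the $P({\R}_+)$ case one must again absorb the negative boundary contribution in Theorem \ref{CT3}. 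Here the monotonicity hypothesis (\ref{mono}), through Lemma \ref{lem lim F=0}, is what guarantees that $F_{\eta_0}\to0$ and that the Hardy potential genuinely blows up at $\partial\Omega$, which is exactly what drives the compactness.
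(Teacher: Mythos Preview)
Your overall strategy matches the paper's closely: the upper bound via Hardy near-extremals, the lower bound for $\lambda\ll0$ via Theorem~\ref{CT3} plus a trace estimate (the paper's Lemma~\ref{lemtrace}), non-attainment for $\lambda<\lambda^\ast$ by strict monotonicity, and attainment for $\lambda>\lambda^\ast$ by a concentration--compactness dichotomy are exactly what the paper does.

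There is, however, a genuine gap in your step (a). The exact extremal $v=f_{\eta_0}(\delta)^{(p-1)/p}$ is \emph{not} an admissible test function: pointwise $|v|^pW_p(\delta)/F_{\eta_0}(\delta)^p=1/F_{\eta_0}(\delta)$, and since $F_{\eta_0}^{-1}\notin L^1$ near $\partial\Omega$ (Remark~\ref{remark2.2}), one has $D(v)=\infty$; likewise $\int_\Omega|\nabla v|^pW_p(\delta)\,dx=\Lambda_p\int_\Omega F_{\eta_0}(\delta)^{-1}\,dx=\infty$. Cutting off at the interior end of the layer $\Omega_\e$ does not help, because the divergence occurs at $\delta\to0$. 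The paper's remedy is to perturb the exponent and use $u_\varepsilon=f_{\eta_0}(\delta)^{(p-1)/p+s(w)\varepsilon}$: the extra $s(w)\varepsilon$ makes both integrals finite (each equal to $f_{\eta_0}(\eta)^{s(w)\varepsilon p}/(p\varepsilon)$ up to bounded terms), with ratio $(1-1/p+s(w)\varepsilon)^p\to\Lambda_p$ as $\varepsilon\to0$. In the $P(\R_+)$ case one still has $u_\varepsilon\not\to0$ pointwise at $\partial\Omega$, and the paper invokes a separate approximation lemma (Lemma~\ref{approximation}) to verify $u_\varepsilon\in W_0^{1,p}$.

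For part~3 your plan is correct but the paper's execution is simpler than the cutoff/Brezis--Lieb route you outline. Instead of multiplying by $\zeta_\e$ and handling cross-terms, the paper applies Theorem~\ref{CT3} directly to $u_k$ on $\Omega_\eta$; this produces a surface integral on $\Sigma_\eta$, which converges by compactness of the trace (Remark~\ref{rem4.1}) and then vanishes as $\eta\to0$ (for $w\in Q$ the sign $s(w)=1$ is favourable; for $w\in P$ the factor $W_p(\eta)\to0$). Combined with weak lower semicontinuity on $\Omega\setminus\Omega_\eta$ this yields directly $J\ge\Lambda_p(1-D(u))+\chi^w_{p,\lambda}(u)\,D(u)$, which is your inequality $J\ge J\,D(u)+\Lambda_p\nu$ written with $\nu=1-D(u)$, and the conclusion follows as you say.
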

In particular we  have  the following inequality:
\begin{cor}\label{cor2.1}
Under the  same assumptions  as in Theorem \ref{Main}, 
there exists a constant $\lambda \in \R$ 
such  that for $u(x)\in W_{0}^{1,p}(\Omega;W_p(\delta))$
\begin{align}\label{HI1}
\int_\Omega & |\nabla u(x)|^p W_p(\delta(x))\,dx \nonumber \\
&\ge \Lambda_{p} \int_\Omega \frac{|u(x)|^p W_p(\delta(x))}{F_{\eta_0}(\delta(x))^p}\,dx 
+ \lambda \int_\Omega|u(x)|^p W_p(\delta(x))\,dx. 
\end{align}

\end{cor}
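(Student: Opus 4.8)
The plan is to read Corollary \ref{cor2.1} off directly from the first assertion of Theorem \ref{Main}, since the inequality (\ref{HI1}) is nothing but the statement that the Rayleigh-type quotient $\chi^w_{p,\lambda}$ never drops below $\Lambda_{p}$, rewritten with the denominator cleared. Concretely, I would take $\lambda = \lambda^\ast$, the threshold constant furnished by Theorem \ref{Main}. With this choice we are in the regime $\lambda \le \lambda^\ast$, so part 1 of the theorem yields $J^w_{p,\lambda} = \Lambda_{p}$.

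Next I would simply unwind the definition of the infimum. Fix an arbitrary $u \in W_{0}^{1,p}(\Omega;W_p(\delta))$ with $u \not\equiv 0$ and abbreviate the denominator as $D(u) = \int_\Omega |u(x)|^p W_p(\delta(x))\,F_{\eta_0}(\delta(x))^{-p}\,dx$. By the definition (\ref{J2}) of $J^w_{p,\lambda}$ as an infimum we have $\chi^w_{p,\lambda}(u) \ge J^w_{p,\lambda} = \Lambda_{p}$. Multiplying this inequality through by $D(u)$ and transposing the two terms in the numerator of $\chi^w_{p,\lambda}$ produces exactly (\ref{HI1}). For $u \equiv 0$ both sides of (\ref{HI1}) vanish, so the inequality holds for every element of the space.

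The only point deserving a word of justification — the sole place where a little care is needed — is that the multiplication by $D(u)$ is legitimate, i.e. that $D(u)$ is strictly positive and finite. Positivity is immediate, since $u \not\equiv 0$ while $W_p(\delta) > 0$ and $F_{\eta_0}(\delta) > 0$ on $\Omega$. Finiteness I would deduce at once from Corollary \ref{NCC1} applied with $\eta = \eta_0$: using $s(w)L' \ge -L'$ it gives $\gamma\, D(u) \le \int_\Omega |\nabla u(x)|^p W_p(\delta(x))\,dx + L'\int_{\Sigma_{\eta_0}} |u(x)|^p W_p(\eta_0)\,d\sigma_{\eta_0}$, whose right-hand side is finite for $u \in W_{0}^{1,p}(\Omega;W_p(\delta))$ — the first term by the very definition of the norm (\ref{2.2}), and the boundary term by the trace estimate used to extend (\ref{2.11}) and (\ref{2.7}) from $C_c^\infty(\Omega)$ to the whole space. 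With $D(u) \in (0,\infty)$ secured, the rearrangement of the preceding paragraph is rigorous.

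Since every nontrivial ingredient has already been established in proving Theorem \ref{Main}, I do not expect any genuine obstacle here; the corollary is purely a reformulation of case 1 of that theorem. If anything, the mildly delicate book-keeping is the sign $s(w) = -1$ occurring for the non-doubling weights $w \in P_A(\mathbf R_+)$, which makes the boundary term in Corollary \ref{NCC1} negative; but as indicated above this only enters the harmless upper bound on $D(u)$ and is absorbed by the trace control on $W_{0}^{1,p}(\Omega;W_p(\delta))$.
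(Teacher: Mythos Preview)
Your argument is correct and is exactly the intended derivation: the paper states Corollary~\ref{cor2.1} immediately after Theorem~\ref{Main} without a separate proof, so it is meant to be read off from assertion~1 of that theorem just as you do, by taking any $\lambda\le\lambda^\ast$ (you pick $\lambda=\lambda^\ast$) and clearing the denominator in $\chi^w_{p,\lambda}(u)\ge J^w_{p,\lambda}=\Lambda_p$. Your care about the finiteness of $D(u)$ is appropriate and correctly handled via Corollary~\ref{NCC1} together with the trace control of Lemma~\ref{lemtrace}; in fact the paper has already implicitly used this when it writes the quotient (\ref{chi}) for arbitrary $u\in W_{0}^{1,p}(\Omega;W_p(\delta))\setminus\{0\}$.
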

\begin{rem}\label{rem2.1}
\begin{enumerate}
\item For the case of $w(t)=1$ and $\lambda=0$, 
the value of the infimum $J^1_{p,0}$ in (\ref{J2}) and its attainability are studied in \cite{MMP}.
\item For the case of $w(t)=1$ and $p=2$, it is shown that 
the infimum $J_{2,\lambda}^1$ in (\ref{J2}) is attained if and only if $\lambda>\lambda^\ast$. 
See \cite{BM}.
If $ p\neq 2$ and  $\lambda=\lambda^\ast$, then it is  an open  problem whether the infimum $J^w_{p,\lambda}$ in (\ref{J2}) is achieved.
\item For the case of $w(t)=t^{\alpha p/(p-1)}\in Q_A(\R_+)$ with $\alpha<1-1/p$, 
Theorem \ref{Main} is shown in \cite{ah3}.
\item In the assertion 3 of Theorem \ref{Main}, 
the minimizer $u(x)\in W_{0}^{1,p}(\Omega;W_p(\delta))$ 
for the variational problem (\ref{J2}) is a non-trivial weak solution 
of the following Euler-Lagrange equation:
\begin{equation*}
-{\rm div}\left(W_p(\delta)|\nabla u|^{p-2}\nabla u\right)-\lambda W_p(\delta)|u|^{p-2}u
=J^w_{p,\lambda} \frac{W_p(\delta)}{F_{\eta_0}(\delta)^p} |u|^{p-2}u \quad \text{in} \ \ {\cal D}'(\Omega).
\end{equation*}
\end{enumerate}
\end{rem}
When $p=2$ and $ \lambda=\lambda^*$ hold, 
we have the following  that is rather precise.
\begin{thm}\label{p=2} In addition to the  assumption of Theorem \ref{Main}, we  assume  that
$p=2$ and $ \lambda=\lambda^*$. Let $\eta_0>0$ be a sufficiently small number as in Theorem \ref{CT3}. Moreover we assume  that
 \begin{equation}\lim_{t\to+0}F_{\eta_0}(t) G_{\eta_0}(t)^2=0 . \label{3.3}
 \end{equation}
Then, $J_{2,\lambda^*}^w$ is  not achieved.
\end{thm}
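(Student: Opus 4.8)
The plan is to argue by contradiction. By Theorem \ref{Main} we have $J^w_{2,\lambda^*}=\Lambda_2=\tfrac14$, so suppose this value is attained by some $u_0\in W_0^{1,2}(\Omega;W_2(\delta))\setminus\{0\}$. Since $p=2$ the quotient $\chi^w_{2,\lambda^*}$ is even and $0$-homogeneous with numerator and denominator depending only on $|\nabla u_0|^2$ and $u_0^2$, so $\chi^w_{2,\lambda^*}(|u_0|)=\chi^w_{2,\lambda^*}(u_0)$ and I may take $u_0\ge 0$. As in Remark \ref{rem2.1}, $u_0$ is then a weak solution of the Euler--Lagrange equation
\begin{equation*}
-\div\big(W_2(\delta)\nabla u_0\big)-\lambda^*W_2(\delta)u_0=\tfrac14\,\frac{W_2(\delta)}{F_{\eta_0}(\delta)^2}\,u_0\qquad\text{in }\mathcal D'(\Omega),
\end{equation*}
and interior elliptic regularity together with the Harnack inequality upgrade $u_0\ge 0$ to $u_0>0$ throughout the connected open set $\Omega$. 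The whole point is then to show that such a positive solution cannot lie in $W_0^{1,2}(\Omega;W_2(\delta))$, by playing an \emph{upper} obstruction against a \emph{lower} one.

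The upper obstruction is membership in the space: if $u_0\in W_0^{1,2}(\Omega;W_2(\delta))$ then $\int_\Omega|\nabla u_0|^2W_2(\delta)\,dx<\infty$, and the generalized Hardy inequality (\ref{HI}) of Corollary \ref{NCC1} (with $p=2$) forces
\begin{equation*}
\int_\Omega\frac{u_0^2\,W_2(\delta)}{F_{\eta_0}(\delta)^2}\,dx\ \le\ \frac1\gamma\int_\Omega|\nabla u_0|^2\,W_2(\delta)\,dx\ <\ \infty .
\end{equation*}
The lower obstruction comes from the equation. I would fiber the collar $\Omega_{\eta_0}$ over $\partial\Omega$ by the normal coordinate $t=\delta(x)$ and study the leading one-dimensional operator $v\mapsto -(W_2 v')'-\tfrac14 W_2F_{\eta_0}^{-2}v$. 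Because $W_2(t)f_{\eta_0}(t)=F_{\eta_0}(t)$ and, by Remark \ref{remark2.2}, $\int^{\eta_0}F_{\eta_0}^{-1}\,dt=+\infty$, this critical operator has the confluent pair of fundamental solutions $\sqrt{f_{\eta_0}}$ and $\sqrt{f_{\eta_0}}\,G_{\eta_0}$, the former being recessive (of minimal growth). By the asymptotic theory of this singular endpoint, along each fiber a positive solution is bounded below by a multiple of the recessive one; realizing this by comparison on shrinking collars $\{\e<\delta<\eta_0\}$ with an explicit subsolution $\underline u=c\,\sqrt{f_{\eta_0}(\delta)}\,(1+o(1))$ and using $u_0>0$ on $\Sigma_{\eta_0}$, I expect
\begin{equation*}
u_0(x)\ \ge\ c\,\sqrt{f_{\eta_0}(\delta(x))}\qquad\text{for small }\delta(x).
\end{equation*}

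Combining the two gives the contradiction. Using $f_{\eta_0}W_2/F_{\eta_0}^2=F_{\eta_0}^{-1}$, the tubular factorization of the measure, and again $F_{\eta_0}^{-1}\notin L^1((0,\eta_0))$, the lower bound yields
\begin{equation*}
\int_\Omega\frac{u_0^2\,W_2(\delta)}{F_{\eta_0}(\delta)^2}\,dx\ \ge\ c^2\int_{\Omega_{\eta_0}}\frac{dx}{F_{\eta_0}(\delta(x))}\ \ge\ c'\int_0^{\eta_0}\frac{dt}{F_{\eta_0}(t)}\ =\ +\infty ,
\end{equation*}
which is incompatible with the finiteness above. Hence no minimizer exists and $J^w_{2,\lambda^*}$ is not achieved.

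The main obstacle is the construction of the comparison subsolution, and this is exactly where hypothesis (\ref{3.3}) is consumed. After the leading cancellation, the residual $-(W_2\underline u')'-\lambda^*W_2\underline u-\tfrac14W_2F_{\eta_0}^{-2}\underline u$ is governed by the mean-curvature term $W_2\underline u'\,\Delta\delta$ and the zeroth-order term $\lambda^*W_2\underline u$; to keep it of the correct sign up to $\partial\Omega$ one must refine $\sqrt{f_{\eta_0}}$ by a correction built from $G_{\eta_0}$, and the smallness that closes this estimate is precisely $\lim_{t\to+0}F_{\eta_0}(t)G_{\eta_0}(t)^2=0$. Moreover (\ref{3.3}) guarantees that even the dominant solution $\sqrt{f_{\eta_0}}\,G_{\eta_0}=\sqrt{F_{\eta_0}G_{\eta_0}^2}\,W_2^{-1/2}$ remains compatible with the vanishing weighted trace, so that the obstruction genuinely resides in the divergence of the weighted integral and not merely in boundary values. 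Carrying out this boundary asymptotic analysis rigorously in a general $C^2$ domain, uniformly in the tangential variables and justifying the weak comparison principle at criticality (for which the positive solution $u_0$ itself serves as a positive supersolution), is the technical heart of the argument.
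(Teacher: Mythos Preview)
Your overall contradiction strategy coincides with the paper's: a nonnegative minimizer solves the Euler--Lagrange equation, is strictly positive by Harnack, and one derives a pointwise lower bound near $\partial\Omega$ by comparison with an explicit subsolution, contradicting finiteness of the weighted Hardy integral. The discrepancy is in the subsolution you propose and the bound you extract from it.

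You aim for $u_0\ge c\,\sqrt{f_{\eta_0}(\delta)}$ via a subsolution $\underline u=c\sqrt{f_{\eta_0}(\delta)}(1+o(1))$. But $\sqrt{f_{\eta_0}(\delta)}$ is \emph{not} a subsolution of the full operator: after the one-dimensional cancellation one finds $L_M^w\bigl(\sqrt{f_{\eta_0}(\delta)}\bigr)=-f_{\eta_0}(\delta)^{-1/2}\bigl(\tfrac{s(w)}{2}\Delta\delta-MF_{\eta_0}(\delta)\bigr)$, and since $\Delta\delta$ has no sign on a general $C^2$ domain, no constant rescues this; a $(1+o(1))$ correction is not specified and no simple one appears to work. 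Your shrinking-collar comparison also leaves the inner boundary $\Sigma_\e$ uncontrolled (when $w\in P(\R_+)$ your candidate even blows up there). The paper instead takes $v_s(t)=f_{\eta_0}(t)^{1/2}G_{\eta_0}(t)^{-s}$ with $s>\tfrac12$. The factor $G_{\eta_0}^{-s}$ does two jobs: it places $v_s(\delta)\in W_0^{1,2}(\Omega_{\eta_0};w(\delta))$, so the weak comparison is obtained by testing against $(\varepsilon v_s(\delta)-u_0)^+$ and invoking Theorem~\ref{CT3} directly (no inner boundary to handle); and it injects a dominant term $s(s+1)>0$ into the residual, after which hypothesis (\ref{3.3}) together with $F_{\eta_0}\to0$, $F_{\eta_0}G_{\eta_0}\to0$ forces $L_M^w(v_s(\delta))\le 0$ in $\Omega_{\eta_0}$. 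Letting $s\downarrow\tfrac12$ yields only $u_0\ge\varepsilon\,f_{\eta_0}(\delta)^{1/2}G_{\eta_0}(\delta)^{-1/2}$, hence $u_0^2\,w(\delta)/F_{\eta_0}(\delta)^2\ge\varepsilon^2\bigl(F_{\eta_0}(\delta)G_{\eta_0}(\delta)\bigr)^{-1}$, and the contradiction comes from $(F_{\eta_0}G_{\eta_0})^{-1}\notin L^1((0,\eta_0))$ (Remark~\ref{remark2.2}) rather than from $F_{\eta_0}^{-1}\notin L^1$. In short, your ``correction built from $G_{\eta_0}$'' is precisely the multiplicative factor $G_{\eta_0}^{-s}$, not a $(1+o(1))$; accepting the weaker lower bound is what makes the argument close.
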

\begin{rem}\label{rem2.2} By  Theorem \ref{Main}, 
$J_{2,\lambda^*}^w=1/4$ holds.
\end{rem}
\begin{exam}
Let $w(t)=t^{\alpha p/(p-1)}$ for $\alpha\in\R$. Then $W_p(t)=t^{\alpha p}$. 
If $\alpha\ge 1-1/p$, then $w(t)\in P_A(\R_+)$, 
if $\alpha<1-1/p$, then $w(t)\in Q_A(\R_+)$.  
Clearly (\ref{mono}) is valid. 
We have that as $t\to+0$ 
$$\begin{array}{l}
F_{\eta_0}(t)=\begin{cases}O(t) & \text{for} \ \ \alpha\ne 1-1/p, 
\\[0.5ex] 
O\bigl(t\log(1/t)\bigr) & \text{for} \ \ \alpha=1-1/p,\end{cases} 
\vspace{2mm}\\
G_{\eta_0}(t)=\begin{cases}O\bigl(\log(1/t)\bigr) & \text{for} \ \ \alpha\ne 1-1/p, 
\\[0.5ex] 
O\bigl(\log\log(1/t)\bigr) & \text{for} \ \ \alpha=1-1/p.\end{cases}
\end{array}$$
Therefore (\ref{3.3}) holds.
\end{exam}
\begin{exam}{
Let either $w(t)=e^{-1/\sqrt{t}}\in P_A(\R_+)$ or $w(t)=e^{1/\sqrt{t}}\in Q_A(\R_+)$. \ 
Then (\ref{mono}) and (\ref{3.3}) hold. 
In fact, we have that as $t\to+0$ 
$$F_{\eta_0}(t)=O\bigl(t^{3/2}\bigr), \quad G_{\eta_0}(t)=O\bigl(t^{-1/2}\bigr), \quad 
F_{\eta_0}(t)G_{\eta_0}(t)^2=O\bigl(t^{1/2}\bigr).$$}
\end{exam}
\vspace{2mm}

Here we give the proof of Lemma \ref{lem lim F=0}.
\vspace{2mm}
\par
\noindent
{\bf Proof of Lemma \ref{lem lim F=0}.} \  
First we assume that $w(t)\in P(\R_+)$. 
Let $\varepsilon$ be any number satisfying $0<\varepsilon<2\eta_0$. 
For $0<t<\varepsilon/2$ we have that 
\begin{equation}\label{3.a}
F_{\eta_0}(t)=w(t)\left(\mu+\int_{\varepsilon/2}^{\eta_0}w(s)^{-1}ds\right)
+w(t)\int_t^{\varepsilon/2}w(s)^{-1}ds.
\end{equation}
Since $w(t)^{-1}\notin L^1((0,\eta_0))$, it follows that $\lim_{t\to+0}w(t)=0$ from 
the Definition \ref{D0}, and hence $w(t)$ is non-decreasing in $(0,\eta_0]$ by (\ref{mono}). 
Then we have 
\begin{equation}\label{3.b}
w(t)\int_t^{\varepsilon/2}w(s)^{-1}ds\le w(t)\int_t^{\varepsilon/2}w(t)^{-1}ds
=\frac{\varepsilon}{2}-t<\frac{\varepsilon}{2}.
\end{equation}
By $\lim_{t\to+0}w(t)=0$, there exists a $\delta>0$ such that for $0<t<\delta$ 
\begin{equation}\label{3.c}
w(t)<\frac{\varepsilon}{2\bigl(\mu+\int_{\varepsilon/2}^{\eta_0}w(s)^{-1}ds\bigr)}.
\end{equation}
From (\ref{3.a}), (\ref{3.b}) and (\ref{3.c}) it follows that for $0<t<\min\{\varepsilon/2,\delta\}$ 
\begin{equation*}
F_{\eta_0}(t)<\frac{\varepsilon}{2}+\frac{\varepsilon}{2}=\varepsilon,
\end{equation*}
which shows (\ref{lim F=0}). Secondly we assume that $w(t)\in Q(\R_+)$. 
If $w'(t)\ge 0$ for $t\in(0,\eta_0)$, then $\lim_{t\to+0}w(t)=a<\infty$, and so 
\begin{equation*}
F_{\eta_0}(t)=w(t)\int_0^t w(s)^{-1}ds \to 0 \quad\text{as} \ \ t\to+0
\end{equation*}
by $w(t)\in L^1((0,\eta_0))$. 
If $w'(t)\le 0$ for $t\in(0,\eta_0)$, then we see that for $t\in(0,\eta_0]$ 
\begin{equation*}
F_{\eta_0}(t)=w(t)\int_0^t w(s)^{-1}ds\le w(t)\int_0^t w(t)^{-1}ds=t,
\end{equation*}
which implies (\ref{lim F=0}). 
It concludes the proof.
\qed

\medskip
\section{Proof of Theorem {\ref{Main}}}
In this section, we give the proof of Theorem \ref{Main}.
\subsection{Upper bound of  $ J^w_{p,\lambda} $}
First, we prove the assertion 1 of Theorem \ref{Main}.
As test functions we adopt  
for $\varepsilon>0$ and $0< \eta \le \eta_0/2$
\begin{equation} u_\varepsilon (t)=
\begin{cases} f_{\eta_0}(t)^{1+s(w)\varepsilon -1/p}  &( 0<t\le \eta),\\
f_{\eta_0}(\eta)^{1+s(w)\varepsilon -1/p} (2\eta-t)/\eta &(\eta<t\le 2\eta),\\
 0  &(2\eta< t\le  \eta_0).\end{cases}
\end{equation}
We  note  that 
\begin{equation}  u'_\varepsilon(t)=\begin{cases}
 \left( 1 +s(w)\varepsilon -1/p\right) f_\eta(t)^{s(w)\varepsilon -1/p}
{s(w)}/{w(t)} &(0<t<\eta),\\
 - f_{\eta_0}(\eta)^{1+s(w)\varepsilon -1/p} /{\eta}  & (\eta< t< 2\eta),\\
 0 &(2\eta< t\le  \eta_0).
\end{cases}
\end{equation}
We  have 
\begin{align} \label{4.3}
\int_0^\eta|u_\varepsilon'(t)|^pW_p(t)\,dt
&= \left( 1-\frac 1p +s(w)\varepsilon\right)^{\!p}\int_0^\eta 
 f_{\eta_0}(t)^{s(w)\varepsilon p-1}\frac 1{w(t)} \,dt \nonumber \\
&= \left( 1-\frac 1p +s(w)\varepsilon\right)^{\!p} \frac{f_{\eta_0}(\eta)^{s(w) \varepsilon p}}{p\varepsilon}.
\end{align}
In  a similar way 
\begin{equation}
\int_0^\eta  \frac{|u_\varepsilon(t)|^p W_p(t)}{F_{\eta_0}(t)^p }\,dt
=\int_0^\eta  f_{\eta_0}(t)^{s(w)\varepsilon p-1}\frac 1{w(t)}\,dt
=  \frac{f_{\eta_0}(\eta)^{s(w) \varepsilon p}}{p\varepsilon}.
\end{equation}
Noting that $f_{\eta_0}(t)^{s(w)\e p}$  is bounded  by  the definitions of $s(w)$ and $f_{\eta_0}(t)$,  it follows from Lemma \ref{lem lim F=0} that 
\begin{align} \label{4.4}
\int_0^\eta  |u_\varepsilon(t)|^p W_p(t)\,dt
&=\int_0^\eta f_{\eta_0}(t)^{p-1+s(w)\e p}w(t)^{p-1}\,dt \nonumber \\ 
&=\int_0^\eta F_{\eta_0}(t)^{p-1}f_{\eta_0}(t)^{s(w)\e p}\,dt <+\infty.
\end{align}
Hence we have 
\begin{align*}
&\int_0^{2\eta}|u_\varepsilon'(t)|^pW_p(t)\,dt=
 \left( 1-\frac 1p +s(w)\varepsilon\right)^{\!p} \frac{f_{\eta_0}(\eta)^{s(w) \varepsilon p}}{p\varepsilon}+C(\e,\eta),\\
&\int_0^{2\eta } \frac{|u_\varepsilon(t)|^p W_p(t)}{F_{\eta_0} (t)^p}\,dt
=  \frac{f_{\eta_0}(\eta)^{s(w) \varepsilon p}}{p\varepsilon}+D(\e,\eta),\\
&\int_0^{2\eta } {|u_\varepsilon(t)|^p W_p(t)}\,dt=\int_0^\eta 
F_{\eta_0}(t)^{p-1}f_{\eta_0}(t)^{s(w)\e p}\,dt+ E(\e,\eta),
\end{align*}
where $C(\e,\eta)$, $D(\e,\eta)$ and  $E(\e,\eta)$ are given by
\begin{align*}
&C(\e,\eta) =  f_{\eta_0}(\eta)^{p+s(w)\varepsilon p -1}
\eta^{-p}\int_\eta^{2\eta}W_p(t)\,dt,\\
&D(\e,\eta)= f_{\eta_0}(\eta)^{p+s(w)\varepsilon p -1} 
\int_\eta^{2\eta}\frac{(2\eta-t)^p W_p(t)}{F_{\eta_0}(t)^p \eta^p}\,dt,\\
&E(\e,\eta)=f_{\eta_0}(\eta)^{p+s(w)\varepsilon p -1}\int_\eta^{2\eta}
\frac{(2\eta-t)^p W_p(t)}{\eta^p}\,dt,
\end{align*}
and they remain bounded as $\e\to+0$.
Therefore we see that
\begin{equation}\frac{\int_0^{2\eta }|u_\varepsilon'(t)|^pW_p(t) \,dt}
{\int_0^{2\eta }  |u_\varepsilon(t)|^p W_p(t) / F_{\eta_0}(t)^pdt}
\to \Lambda_p \quad \text{ as } \ \e\to +0,
\end{equation}
and we  also have 
\begin{equation}\frac{\int_0^{2\eta} |u_\varepsilon(t)|^p W_p(t) \,dt}
{\int_0^{2\eta } |u_\varepsilon(t)|^p W_p(t) / F_{\eta_0}(t)^pdt} \to 0 \quad \text{ as } \ \e\to +0.
\end{equation}
As a result we  have  the following lemma.
\begin{lem}\label{lem3.1}
Let $1<p<\infty$, $0<\eta\le \eta_0/2$  and $w(t)\in W(\mathbf R_+)$. 
For any $\kappa>0$, there exists a function $h(t)\in  W^{1,p}_0((0,2\eta);W_p)$ such  that
\begin{equation} 
\frac{ \int_0^{2\eta} |h'(t)|^p W_p(t)\,dt}{\int_0^{2\eta} |h(t)|^p W_p(t) / F_{\eta_0}(t)^pdt} 
\le \Lambda_p+\kappa. \label{upperbound}
\end{equation}
\end{lem}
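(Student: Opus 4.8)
The plan is to take as $h$ one of the explicit test functions $u_\varepsilon$ constructed just above the statement, for a sufficiently small $\varepsilon>0$ depending on $\kappa$, and to read the estimate off the asymptotic identities already computed. Since the quotient in \eqref{upperbound} is homogeneous of degree zero in $h$, no normalization is required, so it suffices to exhibit a single admissible $u_\varepsilon$ making the quotient small.

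First I would record the structure of the three integrals over $(0,2\eta)$. Each splits into a principal part from $(0,\eta)$, evaluated in closed form through the identity $\frac{d}{dt}\log f_{\eta_0}(t)=s(w)F_{\eta_0}(t)^{-1}$ of Remark \ref{remark2.2}, plus a bounded remainder ($C(\varepsilon,\eta)$, $D(\varepsilon,\eta)$, $E(\varepsilon,\eta)$) produced by the linear piece on $(\eta,2\eta)$. The principal parts of the numerator and of the Hardy denominator carry the common factor $f_{\eta_0}(\eta)^{s(w)\varepsilon p}/(p\varepsilon)$, which blows up like $\varepsilon^{-1}$ as $\varepsilon\to+0$ and hence swamps every bounded remainder. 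Thus the quotient equals $\bigl(1-\tfrac1p+s(w)\varepsilon\bigr)^p$ up to lower order terms and converges to $\Lambda_p=(1-\tfrac1p)^p$. Given $\kappa>0$, I then fix $\varepsilon=\varepsilon(\kappa)>0$ so small that the quotient falls below $\Lambda_p+\kappa$, and set $h=u_\varepsilon$.

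The only point that is not purely computational is verifying $h=u_\varepsilon\in W^{1,p}_0((0,2\eta);W_p)$. Finiteness of the weighted $W^{1,p}$-norm is immediate from the same closed forms, using that $f_{\eta_0}^{s(w)\varepsilon p}$ is bounded and that $F_{\eta_0}$ is bounded by Lemma \ref{lem lim F=0}. Membership in the completion of $C_c^\infty((0,2\eta))$ is the delicate part. Near $t=2\eta$ there is nothing to do, since $u_\varepsilon$ is Lipschitz across $t=\eta$ and vanishes on $[2\eta,\eta_0)$, so only the left endpoint $t=0$ needs an approximation argument. In the $Q$ case $s(w)=1$ and $f_{\eta_0}(t)\to0$, whence $u_\varepsilon(t)\to0$, and a standard cutoff vanishing near $0$ converges in norm.

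The main obstacle is the $P$ case, where $f_{\eta_0}(t)\to\infty$ and $u_\varepsilon$ itself blows up at $0$ while the weight $W_p=w^{p-1}$ degenerates there. I expect to exploit the non-integrability $w^{-1}\notin L^1$: writing $g=f_{\eta_0}$ and taking a cutoff $\zeta_n=\phi_n(g)$ equal to $1$ for $g\le n$ and to $0$ for $g\ge Mn$, one has $\zeta_n'=-\phi_n'(g)w^{-1}$, so the extra gradient contribution transforms, via $dg=-w^{-1}\,dt$, into $\int|\phi_n'(g)|^p g^{\,p-1-\varepsilon p}\,dg$; a linear $\phi_n$ makes this $O(n^{-\varepsilon p})\to0$, whence $\zeta_n u_\varepsilon\to u_\varepsilon$ and $u_\varepsilon\in W^{1,p}_0$. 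This capacity-type density statement for $P$-weights, namely that the endpoint $0$ imposes no boundary condition, is exactly what I would invoke through the trace and density results (Lemma \ref{lemtrace} and Remark \ref{rem4.1}) referenced elsewhere in the paper, and it is the step I regard as the genuine heart of the verification.
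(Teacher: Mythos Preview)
Your proposal is correct and follows the paper's approach essentially verbatim: take $h=u_\varepsilon$, use the closed-form computations preceding the lemma to see the quotient tends to $\Lambda_p$, and then verify membership in $W^{1,p}_0((0,2\eta);W_p)$ by a cutoff built from $f_{\eta_0}$ in the $P$ case. The paper packages this last step as a separate approximation lemma (Lemma~\ref{approximation}) with the explicit cutoff $\varphi_{\overline\varepsilon}(t)=\bigl(f_{\eta_0}(\overline\varepsilon)-f_{\eta_0}(t)\bigr)/\bigl(f_{\eta_0}(\overline\varepsilon)-f_{\eta_0}(\eta)\bigr)$, which is exactly your $\phi_n(g)$ construction in different parametrization, and the resulting calculation of $\int|\varphi'_{\overline\varepsilon}|^p|h|^pW_p\,dt\to0$ matches your $O(n^{-\varepsilon p})$ estimate.

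One small correction: your closing appeal to Lemma~\ref{lemtrace} and Remark~\ref{rem4.1} is misplaced, as those concern the $N$-dimensional trace on the interior hypersurface $\Sigma_\eta$ and have nothing to do with one-dimensional density at the degenerate endpoint $t=0$; your own cutoff argument is the correct and self-contained justification, and you should not cite those results here.
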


\begin{proof}
By $ L^p((0,\eta); W_p)$ we denote the space of Lebesgue measurable functions with weight $ W_p(t)$, 
for which 
\begin{equation*} 
\| u \|_{ L^p((0,\eta); W_p)} = \bigg( \int_0^\eta|u(t)|^p W_p(t)\,dt\bigg ) ^{\!1/p} < +\infty. 
\end{equation*} 
 $W_{0}^{1,p}((0,\eta); W_p)$ is given by the  completion of $C_c^\infty((0,\eta))$ with respect to the norm defined by
\begin{equation*} 
\| u\|_{ W_{0}^{1,p}((0,\eta); W_p)} =
\|u' \|_{ L^p((0,\eta); W_p)} + \| u\|_{ L^p((0,\eta); W_p)}. 
\end{equation*}
Then 
 $W_{0}^{1,p}((0,\eta); W_p) $  becomes a Banach space  
with the norm $\| \cdot\|_{ W_{0}^{1,p}((0,\eta); W_p)}$. 

Let  us  set
  $ h(t)= u_\e(t)$ for a sufficiently small $\e>0$.  Then $h(t)$ satisfies the  estimate (\ref{upperbound}).
It suffices to  check that $h(t)\in  W^{1,p}_0((0,2\eta);W_p)$. 
If $w(t)\in Q(\mathbf R_+)$,  then  $ \lim_{t\to +0}f_{\eta_0}(t)=0$ and   
  $ \lim_{t\to +0}u_\e(t) = \lim_{t\to +0}f_{\eta_0}(t)^{1+s(w)\varepsilon -1/p}=0$. Therefore $h(t)$ is clearly  approximated by test functions in $C^\infty_c((0,2\eta))$. 
  \par\medskip
   If $w(t)\in P(\mathbf R_+)$, then  we employ the following lemma:
 \begin{lem}\label{approximation} Assume  that $1<p<\infty$ and $w(t)\in P(\mathbf R_+)$. 
 For $\e>0$, $\eta>0$  and $\eta_0>0$  satisfying $0<\eta\le \eta_0/2$, let us  set 
 \begin{equation} \varphi_\varepsilon(t)= \,\,
0 \,\,( 0\le t\le \varepsilon) ;  \quad \frac {f_{\eta_0}(\varepsilon)-f_{\eta_0}(t)  }{f_{\eta_0}(\varepsilon)-f_{\eta_0}(\eta) }\, \,(\varepsilon \le t\le \eta);\quad  1\, \,(\eta \le t \le 2\eta). \label{7.1}\end{equation}
Then, as $\e$  
$\to +0$, 
$\varphi_\e \to 1 $ in $L^p((0,2\eta);W_p)$ and $\varphi'_\e\to0$ in $L^p((0,2\eta);W_p)$.
\end{lem}

\begin{proof}
Since $\lim_{t\to +0}f_{\eta_0}(t)=\infty$, 
clearly  $\varphi_\e(t)\to 1 $ 
{
in $L^p((0,2\eta);W_p)$ as $\e\to +0$, 
} 
and 
$ \int _0^{2\eta}|\varphi_\varepsilon '(t)|^p W_p(t)\,dt
=( f_{\eta_0}(\varepsilon)-f_{\eta_0}(\eta))^{1-p} \to 0$ as $\varepsilon\to +0$. 
Then we  see  the assertion.
\end{proof}

\par\medskip\noindent{\bf {End of  the proof of Lemma \ref{lem3.1}:}} \ 
For  $0<\overline\e<\eta$,
we  set  $h_{\overline \e}(t)=\varphi_{\overline \e}(t) h(t)$, where $\varphi_{\overline \e}(t)$ is defined by (\ref{7.1}) with $\e=\overline \e $. 
Then $ \supp h_{\overline \e}(t)\subset\, [ \overline \e, 2\eta]$.
By  virtue of Lemma \ref{approximation}, we  also see that 
 $h_{\overline \e}(t)\to h(t)$ in $W^{1,p}((0,2\eta);W_p)$ as $\overline \e \to+0$.
In fact, noting  that $ h'_{\overline \e}(t)= \varphi'_{\overline \e}(t) h(t)+\varphi_{\overline \e}(t) h' (t)$, we  have
\begin{equation*}
\begin{split}
\int_0^{2\eta } &|  h'_{\overline\e}(t)- h'(t)|^pW_p(t)\,dt \\
&\le C_p\left(\int_0^{2\eta } (1-\varphi_{\overline \e }(t) )^p| h'(t)|^pW_p(t)\,dt +
\int_0^{2\eta } |\varphi'_{\overline \e }(t) |^p |h(t)|^p W_p(t)\,dt\right)
\end{split}
\end{equation*}
with some constant $C_p>0$ depending only on $p$. 
The first term obviously goes  to $0$  as $\overline\e\to +0$.  
As for the second,  noting  that $s(w)=-1$  and $0<\e<1$, we have 
\begin{align*}\int_0^{2\eta } |\varphi'_{\overline \e }(t) |^p |h(t)|^pW_p(t)\,dt&=
\int_{\overline \e }^{\eta} |\varphi'_{\overline \e }(t) |^p |h(t)|^pW_p(t)\,dt\\
&=\frac1{( f_{\eta_0}(\overline \e )- f_{\eta_0}(\eta))^{p}}\int_{\overline \e }^{\eta}\frac{f_{\eta_0}(t)^{p-1+ps(w)\e }}{w(t)}\,dt\\
&= \frac{1}{p(1-\e)} \frac{ f_{\eta_0}(\overline \e)^{p(1-\e)}- f_{\eta_0}(\eta)^{p(1-\e)}}{( f_{\eta_0}(\overline \e )- f_{\eta_0}(\eta))^{p}}.
\end{align*}
Since $\lim_{t\to+0} f_{\eta_0}(t)=\infty$, 
we see that $\int_0^{2\eta } |\varphi'_{\overline \e} (t)|^p |h(t)|^pW_p(t)\,dt\to 0$ as $\overline \e\to +0$.
Since $h_{\overline{\e}}(t)$ is clearly  approximated 
by test functions in $C^\infty_c((0,2\eta))$, 
 the assertion $h(t)\in W_0^{1,p}((0,2\eta);W_p)$ follows.
\end{proof}
 
\begin{lem}\label{lem3.2}
Let $\Omega$ be  a  bounded domain of  class $C^2$ in ${\R}^N$. 
Let $1<p<\infty$ and $w(t)\in W(\mathbf R_+)$. 
Then it holds that 
\begin{equation}\label{ubd} 
J^w_{p,\lambda} \le \Lambda_{p} 
\end{equation}
for all $\lambda\in\R$.
\end{lem}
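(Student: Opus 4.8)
The idea is to transplant the one-dimensional trial function furnished by Lemma~\ref{lem3.1} into $\Omega$ by means of the boundary-distance coordinates, and to show that the resulting quotient $\chi^w_{p,\lambda}$ can be forced arbitrarily close to $\Lambda_p$. First I would record the geometry available because $\Omega$ is of class $C^2$: on the tubular neighbourhood $\Omega_{\eta_0}$ the map $x\mapsto(\delta(x),\pi(x))$, where $\pi(x)$ is the nearest boundary point, is a $C^1$ diffeomorphism onto $(0,\eta_0)\times\partial\Omega$; moreover $|\nabla\delta|=1$ a.e., and the Jacobian $J(t,\sigma)$ of the inverse map satisfies $|J(t,\sigma)-1|\le C_0\,t$ on $(0,\eta_0)\times\partial\Omega$, with $C_0$ depending only on the curvature bound of $\partial\Omega$. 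Thus $dx=J(t,\sigma)\,dt\,d\sigma$, and on the set $\{t\le 2\eta\}$ one has $1-C_1\eta\le J\le 1+C_1\eta$ with $C_1=2C_0$.

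Next, fix $\kappa>0$ and $0<\eta\le\eta_0/2$ and take the function $h=u_\e$ (for $\e>0$ small) from the proof of Lemma~\ref{lem3.1}. Besides the bound
\begin{equation*}
\frac{\int_0^{2\eta}|h'(t)|^pW_p(t)\,dt}{\int_0^{2\eta}|h(t)|^pW_p(t)/F_{\eta_0}(t)^p\,dt}\le\Lambda_p+\kappa,
\end{equation*}
the computation carried out just before Lemma~\ref{lem3.1} shows that the ratio
\begin{equation*}
\theta_\e:=\frac{\int_0^{2\eta}|h(t)|^pW_p(t)\,dt}{\int_0^{2\eta}|h(t)|^pW_p(t)/F_{\eta_0}(t)^p\,dt}
\end{equation*}
tends to $0$ as $\e\to+0$, so for $\e$ small we may assume both this bound and $\theta_\e\le\kappa$. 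Define $u(x)=h(\delta(x))$, extended by $0$ on $\Omega\setminus\Omega_{2\eta}$. Since $h$ is a limit in $W^{1,p}((0,2\eta);W_p)$ of functions $\varphi_n\in C_c^\infty((0,2\eta))$, and each $\varphi_n(\delta(\cdot))$ lies in $C_c^2(\Omega)$ with support away from $\partial\Omega$ and from $\Sigma_{2\eta}$, it follows that $u\in W_0^{1,p}(\Omega;W_p(\delta))$, so $u$ is an admissible competitor in (\ref{J2}).

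Then I would evaluate the three integrals defining $\chi^w_{p,\lambda}(u)$ in these coordinates. Using $|\nabla u|=|h'(\delta)|$ and $dx=J\,dt\,d\sigma$,
\begin{equation*}
\int_\Omega|\nabla u|^pW_p(\delta)\,dx=\int_{\partial\Omega}\int_0^{2\eta}|h'(t)|^pW_p(t)J(t,\sigma)\,dt\,d\sigma,
\end{equation*}
and likewise for $\int_\Omega|u|^pW_p(\delta)/F_{\eta_0}(\delta)^p\,dx$ and $\int_\Omega|u|^pW_p(\delta)\,dx$. Bounding the positive gradient integral above by the factor $1+C_1\eta$, the positive Hardy integral below by $1-C_1\eta$, and the $\lambda$-integral in absolute value, the common factor $\int_{\partial\Omega}d\sigma=|\partial\Omega|$ cancels and one arrives at
\begin{equation*}
\chi^w_{p,\lambda}(u)\le\frac{1+C_1\eta}{1-C_1\eta}\bigl(\Lambda_p+\kappa+|\lambda|\,\theta_\e\bigr)\le\frac{1+C_1\eta}{1-C_1\eta}\bigl(\Lambda_p+(1+|\lambda|)\kappa\bigr).
\end{equation*}
Letting first $\e\to+0$ (so that $\kappa$ may be taken arbitrarily small) and then $\eta\to+0$ yields $J^w_{p,\lambda}\le\Lambda_p$ for every $\lambda\in\R$.

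The step demanding the most care is the treatment of the lower-order term $-\lambda\int_\Omega|u|^pW_p(\delta)\,dx$ in the numerator: for $\lambda<0$ it is positive and a priori competes with the gradient term, and it is precisely the vanishing of $\theta_\e$ (established before Lemma~\ref{lem3.1}) that renders it negligible against the Hardy denominator. The geometric reduction through the Jacobian is routine given the $C^2$ regularity of $\partial\Omega$, but one must not omit the verification that $u=h(\delta(\cdot))$ genuinely belongs to $W_0^{1,p}(\Omega;W_p(\delta))$, for which the approximation built into the proof of Lemma~\ref{lem3.1} is used.
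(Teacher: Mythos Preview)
Your proof is correct and follows essentially the same route as the paper: transplant the one-dimensional trial function of Lemma~\ref{lem3.1} into $\Omega$ via the tubular-neighbourhood coordinates, control the Jacobian by $1\pm c\eta$, and let the parameters tend to zero. The only cosmetic difference is in the handling of the lower-order term: the paper bounds $\int_{\Omega_\eta}|u|^pW_p(\delta)\,dx\le\bigl(\sup_{0<t<\eta}F_{\eta_0}(t)\bigr)^p\int_{\Omega_\eta}|u|^pW_p(\delta)/F_{\eta_0}(\delta)^p\,dx$ and then invokes Lemma~\ref{lem lim F=0} to send $\eta\to0$, whereas you use the vanishing of the ratio $\theta_\e$ as $\e\to0$ established just before Lemma~\ref{lem3.1}; both arguments rest on the same underlying estimates.
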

\begin{proof}
For each  small $\eta>0$, 
by $\Omega_\eta $ we denote a tubular neighborhood of $\partial \Omega$;
\begin{equation}\label{t-nbd}
\Omega_\eta = \{ x\in \Omega:  \delta(x)= {\rm dist}(x,\partial\Omega)<\eta \}. 
\end{equation}
Since the boundary $\partial\Omega$ is of class $C^2$, there exists an $\eta_0>0$ such that 
for any $\eta\in(0,\eta_0)$ and every $x\in\Omega_\eta$ 
we have a unique point $\sigma(x)\in\partial\Omega$ satisfying $\delta(x)=|x-\sigma(x)|$. 
The mapping 
\begin{equation*}
\Omega_\eta\ni x\mapsto (\delta(x),\sigma(x))=(t,\sigma)\in(0,\eta)\times\partial\Omega
\end{equation*}
is a $C^2$ diffeomorphism, and its inverse is given by 
\begin{equation*}
(0,\eta)\times\partial\Omega\ni(t,\sigma)\mapsto 
x(t,\sigma)=\sigma+t\cdot n(\sigma)\in\Omega_\eta,
\end{equation*}
where $n(\sigma)$ is the inward unit normal to $\partial\Omega$ at $\sigma\in\partial\Omega$. 
For each $t\in(0,\eta)$, the mapping 
\begin{equation*}
\partial\Omega\ni\sigma\mapsto 
\sigma_t(\sigma)=x(t,\sigma)\in\Sigma_t=\{x\in\Omega:\delta(x)=t\}
\end{equation*}
is also a $C^2$ diffeomorphism of $\partial\Omega$ onto $\Sigma_t$, 
and its Jacobian satisfies 
\begin{equation}\label{3.5}
|\text{Jac}\,\sigma_t(\sigma)-1|\le ct \qquad \text{for any} \ \ \sigma\in\partial\Omega,
\end{equation}
where $c$ is a positive constant depending only on $\eta_0$, $\partial\Omega$  
and the choice of local coordinates. 
Since $n(\sigma)$ is orthogonal to $\Sigma_t$ 
at $\sigma_t(\sigma)=\sigma+t\cdot n(\sigma)\in\Sigma_t$, 
it follows that for every integrable function $v(x)$ in $\Omega_\eta$
\begin{align}\label{3.6}
\int_{\Omega_\eta}v(x)dx 
& =\int_0^\eta dt\int_{\Sigma_t}v(\sigma_t)d\sigma_t \nonumber \\
& =\int_0^\eta dt\int_{\partial\Omega}v(x(t,\sigma))|\text{Jac}\,\sigma_t(\sigma)|d\sigma,
\end{align}
where $d\sigma$ and $d\sigma_t$ denote surface elements on 
$\partial\Omega$ and $\Sigma_t$, respectively. 
Hence (\ref{3.6}) together with (\ref{3.5}) implies that 
for every integrable function $v(x)$ in $\Omega_\eta$
\begin{align}
\int_0^\eta (1-ct)dt\int_{\partial\Omega}|v(x(t,\sigma))|d\sigma 
& \le \int_{\Omega_\eta}|v(x)|dx \label{3.7} 
\\ 
& \le \int_0^\eta (1+ct)dt\int_{\partial\Omega}|v(x(t,\sigma))|d\sigma. \label{3.8}
\end{align}

Let $\kappa>0$, and let $\eta\in(0,\eta_0)$. 
Take $h(t)\in  W^{1,p}_0((0,\eta);W_p)$  be a function satisfying (\ref{upperbound}) with replacing $2\eta$ by $\eta$ for  simplicity.
Define
\begin{equation}\label{3.9}
u(x)= \begin{cases} h(\delta(x)) & \text{if} \quad x\in \Omega_\eta,\\ 
  0 & \text{if} \quad x\in \Omega\setminus\Omega_\eta. \end{cases}  
\end{equation}
Then we  have $\supp u\subset \Omega_\eta$. 
Since $|\nabla u(x)|=|h'(\delta(x))|$ for $x\in\Omega_\eta$ by $|\nabla \delta(x)|=1$, 
it follows from (\ref{3.8}) that 
\begin{equation}\label{3.10}
\int_{\Omega_\eta}|\nabla u(x)|^pW_p(\delta(x))\, dx 
\le (1+c\eta)|\partial\Omega|\int_0^\eta |h'(t)|^pW_p(t)\, dt,  
\end{equation}
which implies $u(x)\in W_{0}^{1,p}(\Omega;W_p(\delta))$ by Lemma \ref{lem3.1}.
On the other hand, by (\ref{3.7}) and (\ref{3.9}) we have that 
\begin{equation}\label{3.11}
\int_{\Omega_\eta}|u(x)|^p\frac{W_p(\delta(x))}{F_{\eta_0}(\delta(x))^p}dx 
\ge (1-c\eta)|\partial\Omega|\int_0^\eta |h(t)|^p\frac{W_p(t)}{F_{\eta_0}(t)^p}dt.  
\end{equation}
By combining (\ref{3.10}), (\ref{3.11}) and  trivial estimate 
\begin{align}&
\int_{\Omega_\eta}|u(x)|^pW_p(\delta(x))\,dx 
\le \Bigl(\,\sup_{0<t<\eta}F_{\eta_0}(t)\Bigr)^p
\int_{\Omega_\eta}|u(x)|^p\frac{W_p(\delta(x))}{F_{\eta_0}(\delta(x))^p}\,dx, 
\end{align}
we obtain that 
\begin{equation*}
 \chi^w_{p,\lambda} (u)
\le \frac{1+c\eta}{1-c\eta}\frac{\int_0^\eta |h'(t)|^pW_p(t)\,dt}
{\int_0^\eta |h(t)|^p W_p(t) / F_{\eta_0}(t)^pdt} 
+|\lambda|\Bigl(\,\sup_{0<t<\eta}F_{\eta_0}(t)\Bigr)^p. 
\end{equation*}
This together with Lemma \ref{lem3.1} implies that 
\begin{equation}\label{3.12}
J_{p,\lambda}^w
\le \frac{1+c\eta}{1-c\eta}(\Lambda_{p}+\kappa) 
+|\lambda|\Bigl(\,\sup_{0<t<\eta}F_{\eta_0}(t)\Bigr)^p. 
\end{equation}
Letting $\eta\to +0$ and $\kappa\to+0$ in (\ref{3.12}),  
then (\ref{ubd}) follows from Lemma \ref{lem lim F=0}. 
Therefore it concludes the proof.
\end{proof}

\begin{lem}\label{lem3.3}
Let $\Omega$ be  a  bounded domain of  class $C^2$ in ${\R}^N$. 
Let $1<p<\infty$ and $w(t)\in W_A({\R}_+)$. 
Then there exists a $\lambda\in\R$ such that $J^w_{p,\lambda} =\Lambda_{p}$.
\end{lem}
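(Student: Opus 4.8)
The goal is to produce a single value $\lambda$ for which the infimum $J^w_{p,\lambda}$ equals $\Lambda_p$ exactly. By Lemma \ref{lem3.2} we already know $J^w_{p,\lambda}\le\Lambda_p$ for every $\lambda\in\R$, so the entire task reduces to exhibiting one $\lambda$ for which the matching lower bound $J^w_{p,\lambda}\ge\Lambda_p$ holds. The natural source of such a lower bound is the weighted Hardy inequality of Theorem \ref{CT3}, which provides, on the tubular neighborhood $\Omega_\eta$, the estimate
\begin{equation*}
\int_{\Omega_\eta}\Bigl(|\nabla u|^p-\Lambda_p\frac{|u|^p}{F_{\eta_0}(\delta)^p}\Bigr)W_p(\delta)\,dx
\ge C\int_{\Omega_\eta}\frac{|u|^p W_p(\delta)}{F_\eta(\delta)^p G_\eta(\delta)^2}\,dx
+s(w)L'\int_{\Sigma_\eta}|u|^p W_p(\eta)\,d\sigma_\eta.
\end{equation*}
The plan is to discard the boundary term (handling its sign via $s(w)$ and choosing $\eta$ so it is nonnegative, or absorbing it into the $\lambda$-term on $\Omega\setminus\Omega_\eta$) and to combine the Hardy bound on $\Omega_\eta$ with a crude estimate on the interior region $\Omega\setminus\Omega_\eta$, where $\delta(x)$ is bounded below and $W_p(\delta)$, $F_{\eta_0}(\delta)$ are both bounded above and below.

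First I would split $\int_\Omega|\nabla u|^pW_p(\delta)\,dx$ into the integral over $\Omega_\eta$ and the integral over $\Omega\setminus\Omega_\eta$. On $\Omega_\eta$ I apply Theorem \ref{CT3} to bound $\int_{\Omega_\eta}|\nabla u|^pW_p(\delta)$ from below by $\Lambda_p\int_{\Omega_\eta}|u|^pW_p(\delta)/F_{\eta_0}(\delta)^p$ plus a nonnegative remainder. On the interior $\Omega\setminus\Omega_\eta$, the weight $W_p(\delta(x))$ is comparable to a positive constant and $F_{\eta_0}(\delta(x))$ is comparable to $\sup_{0<t<\eta_0}F_{\eta_0}(t)<\infty$ (finiteness comes from Lemma \ref{lem lim F=0}); hence there is a constant $c_0>0$ such that
\begin{equation*}
\Lambda_p\frac{W_p(\delta(x))}{F_{\eta_0}(\delta(x))^p}\le c_0\,W_p(\delta(x))
\qquad\text{for } x\in\Omega\setminus\Omega_\eta.
\end{equation*}
Therefore, by choosing $\lambda=-c_0$, the term $-\lambda\int_\Omega|u|^pW_p(\delta)=c_0\int_\Omega|u|^pW_p(\delta)$ dominates $\Lambda_p\int_{\Omega\setminus\Omega_\eta}|u|^pW_p(\delta)/F_{\eta_0}(\delta)^p$, supplying exactly the Hardy lower bound on the interior that Theorem \ref{CT3} only guaranteed near the boundary.

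Assembling the two pieces gives, for this specific $\lambda$,
\begin{equation*}
\int_\Omega|\nabla u|^pW_p(\delta)\,dx-\lambda\int_\Omega|u|^pW_p(\delta)\,dx
\ge \Lambda_p\int_\Omega\frac{|u|^p W_p(\delta)}{F_{\eta_0}(\delta)^p}\,dx,
\end{equation*}
which upon dividing by the denominator of $\chi^w_{p,\lambda}$ yields $\chi^w_{p,\lambda}(u)\ge\Lambda_p$ for all admissible $u$, hence $J^w_{p,\lambda}\ge\Lambda_p$. Combined with Lemma \ref{lem3.2} this forces $J^w_{p,\lambda}=\Lambda_p$, proving the lemma. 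The main obstacle is the surface term $s(w)L'\int_{\Sigma_\eta}|u|^pW_p(\eta)\,d\sigma_\eta$: when $w\in P_A(\R_+)$ one has $s(w)=-1$ and the term has the wrong (negative) sign, so it cannot simply be dropped. I would handle this by increasing the constant $c_0$ (equivalently, making $\lambda$ more negative) so that the positive interior contribution $c_0\int_{\Omega\setminus\Omega_\eta}|u|^pW_p(\delta)$ absorbs this boundary term as well, using a trace-type estimate to control $\int_{\Sigma_\eta}|u|^pW_p(\eta)\,d\sigma_\eta$ by the bulk norm of $u$ on $\Omega\setminus\Omega_\eta$; alternatively, if the earlier results already guarantee $L'$ can be taken with favorable sign for suitable $\eta$, the boundary term is harmless and the argument is as above.
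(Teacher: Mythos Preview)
Your approach is essentially the paper's: split into $\Omega_\eta$ and $\Omega\setminus\Omega_\eta$, apply Theorem \ref{CT3} near the boundary, bound $\Lambda_p W_p(\delta)/F_{\eta_0}(\delta)^p$ by $c_0 W_p(\delta)$ on the interior, and choose $\lambda$ negative enough; the paper likewise handles the surface term via a trace estimate (its Lemma \ref{lemtrace}). One point of precision: a trace inequality cannot control $\int_{\Sigma_\eta}|u|^p W_p(\eta)\,d\sigma_\eta$ by the $L^p$ norm of $u$ alone---it necessarily produces a term $\e\int_{\Omega_{3\eta}\setminus\overline{\Omega_\eta}}|\nabla u|^p W_p(\delta)\,dx$ as well, and the paper absorbs this by choosing $\e$ with $L'\e=1$ so that it combines with $\int_{\Omega_\eta}|\nabla u|^p W_p(\delta)$ to give at most $\int_\Omega|\nabla u|^p W_p(\delta)$; since you have left the interior gradient integral unused, this fits into your scheme, but your phrase ``bulk norm of $u$'' understates what is needed.
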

 
\begin{proof}
Let $\eta_0>0$ be a sufficiently small number as in Theorem \ref{CT3}. 
Take  and fix any $u(x)\in W_{0}^{1,p}(\Omega;W_p(\delta))\setminus\{0\}$. 
Then, for $\eta\in (0,\eta_0]$ 
\begin{align}
\int_\Omega &\frac{|u(x)|^p W_p(\delta(x))}{F_{\eta_0}(\delta(x))^p}\,dx \nonumber \\
& =\int_{\Omega_\eta} \frac{|u(x)|^p W_p(\delta(x))}{F_{\eta_0}(\delta(x))^p} \,dx
+\int_{\Omega\setminus \Omega_\eta} \frac{|u(x)|^p W_p(\delta(x))}{F_{\eta_0}(\delta(x))^p}\,dx.
\end{align}
Since there exists a positive  number  $C_\eta$ independent of  $u(x)$ such  that 
\begin{equation}\label{4.24}
\int_{\Omega\setminus\Omega_\eta}|u(x)|^p\frac{W_p(\delta(x))}{F_{\eta_0}(\delta(x))^p}\,dx
\le C_\eta\int_{\Omega\setminus\Omega_\eta}|u(x)|^pW_p(\delta(x))\,dx,
\end{equation}
by using Hardy's inequality (\ref{2.11}) we have 
\begin{align}\label{4.26}
\Lambda_{p} &\int_{\Omega} \frac{|u(x)|^p W_p(\delta(x))}{F_{\eta_0}(\delta(x))^p}\,dx\le
\int_{\Omega_\eta}|\nabla u(x)|^pW_p(\delta(x))\,dx \nonumber \\ 
&-  s(w)L' \int_{\Sigma_\eta} |u(\sigma_\eta)|^p W_p(\eta)\,d\sigma_\eta
+\Lambda_{p}C_\eta\int_{\Omega\setminus\Omega_\eta}|u(x)|^pW_p(\delta(x))\,dx.
\end{align}
In order to control  the  integrand on there surface $\Sigma_\eta$ we prepare the following:
\begin{lem}\label{lemtrace}
Let $\Omega$ be  a  bounded domain of  class $C^2$ in ${\R}^N$. 
Let $1<p<\infty$ and $w(t)\in W(\mathbf R_+)$. 
Assume that $\eta_0 $ is a  sufficiently small positive  number and $\eta\in (0,\eta_0/3)$. Then, for  any $\e>0$ there exists a positive  number $C_{\varepsilon,\eta}$ 
such  that we have for any  $u(x)\in W^{1,p}_0(\Omega; W_p(\delta))$
\begin{equation}
\| u\|^p_{ L^p( \Sigma_\eta; W_p(\delta))}\le \e 
\| |\nabla u | \|^p_{{ L^p( \Omega_{3\eta}\setminus \overline{\Omega_\eta}; W_p(\delta))}}+ 
C_{\varepsilon,\eta}  
\| u\|^p_{ L^p( \Omega_{3\eta}\setminus \overline{\Omega_\eta}; W_p(\delta))}. \label{traceineq}
\end{equation}
Here we  denote by $\overline{\Omega_\eta}$ the  closure of ${\Omega_\eta}$.
\end{lem}
\begin{rem}\label{rem4.1}By Rellich's theorem and Hardy type inequality, 
we see that the imbedding 
$W^{1,p}_0(\Omega;W_p(\delta)) \hookrightarrow L^p(\Omega; W_p(\delta))$ is compact.
Therefore, by this lemma we  see that 
{a trace operator}
$ W^{1,p}_0(\Omega; W_p(\delta)) 
\to L^p(\Sigma_\eta ; W_p(\delta))$ 
is also compact. 
\end{rem}

\begin{proof}
For $\eta\in (0, \max_{x\in \Omega} \delta(x)/3)$,
let  $W^{1,p}(\Omega_{3\eta}\setminus \overline{\Omega_\eta}; W_p(\delta))$  be 
 given by the  completion of $C^\infty(\Omega_{3\eta}\setminus \overline{\Omega_\eta})$ with respect to the norm 
defined by
\begin{equation*} 
\| u\|_{W^{1,p}(\Omega_{3\eta}\setminus \overline{\Omega_\eta}; W_p(\delta)) } =
\| |\nabla u|  \|_{ L^p(\Omega_{3\eta}\setminus \overline{\Omega_\eta}; W_p(\delta))} + \| u\|_{ L^p(\Omega_{3\eta}\setminus \overline{\Omega_\eta}; W_p(\delta))}. 
\end{equation*}
Since $W_p(\delta(x))>0$ in $\overline{\Omega_{3\eta}\setminus\overline{\Omega_\eta}}$, $W^{1,p}(\Omega_{3\eta}\setminus \overline{\Omega_\eta}; W_p(\delta)) $  is well-defined and  becomes a Banach space  with the norm 
$\| \cdot\|_{W^{1,p}(\Omega_{3\eta}\setminus \overline{\Omega_\eta}; W_p(\delta)) }$.

Hence the inequality (\ref{traceineq}) follows from the standard theory for a trace operator 
$$W^{1,p}(\Omega_{3\eta}\setminus \overline{\Omega_\eta}; W_p(\delta)) 
\to   L^p(\Sigma_\eta; W_p(\delta)).$$ Here we give a simple proof of  it.
We use the following cut-off function $ \psi(x)\in C^\infty( \Omega)$ such  that
$ \psi(x)\ge 0$  and 
\begin{equation}\psi(x)= \begin{cases}
 1 &  (x\in \Omega_{2\eta}),\\
 0 &  (x\in \Omega\setminus \Omega_{3\eta}).
\end{cases}
\end{equation}
We retain the  notations in the proof of Lemma \ref{lem3.2}. 
Take and fix a  $u(x)\in W^{1,p}_0(\Omega;W_p(\delta))$ and  assume $u(x)\ge 0$.  Then,
\begin{align*}\int_{\Sigma_\eta}& u(\sigma_\eta)^p  W_p(\eta)\,d\sigma_\eta
=\int_{\partial\Omega}
u(x(\eta,\sigma))^pW_p(\eta)|\text{Jac}\,\sigma_\eta(\sigma)|\,d\sigma\\
=& -\int_{\partial\Omega}d\sigma \int_\eta^{3\eta} \frac{\partial}{\partial t}\left(
u(x(t,\sigma))^p \psi (x(t,\sigma)) W_p(t)|\mbox{Jac}\,\sigma_t(\sigma)|\right)\,dt\\
=&- \int_{\partial\Omega} \,d\sigma 
\int_\eta^{3\eta} \frac{\partial}{\partial t}\left(
u(x(t,\sigma))^p \right)\cdot \psi (x(t,\sigma)) W_p(t)|
\mbox{Jac}\,\sigma_t(\sigma)|\,dt \\
&-\int_{\partial\Omega} \,d\sigma 
\int_\eta^{3\eta} 
u(x(t,\sigma))^p \cdot \frac{\partial}{\partial t}\left(\psi (x(t,\sigma)) W_p(t)|\mbox{Jac}\,\sigma_t(\sigma)| \right)\,dt\\
=& \ I_1+ I_2.
\end{align*}
Note  that $x(t,\sigma), W_p(t),\mbox{Jac}\,\sigma_t(\sigma) \in C^1$ in $t\in (\eta,3\eta)$ and 
$$ \int_{\Omega_{3\eta}\setminus \Omega_\eta} u(x)^p\,dx=\int_{\partial\Omega}\,d\sigma
\int_\eta^{3\eta} u(x(t,\sigma))^p |\mbox{Jac}\,\sigma_t(\sigma)| \,dt.
$$
Then,  we have  for  some $C_\eta>0$ independent of $u(x)$  
$$ |I_2| \le C_\eta
\int_{\Omega_{3\eta}\setminus \Omega_\eta}u(x)^p W_p(\delta(x))\,dx.$$
As for $I_1$, for any $\e>0$ there is a positive number $C_\e$ 
independent of $u(x)$ and $\eta$ such  that we  have 
\begin{align*} |I_1|&\le \e \int_{\Omega_{3\eta}\setminus \Omega_\eta}|\nabla u(x)|^pW_p(\delta(x))\,dx+
C_\e \int_{\Omega_{3\eta}\setminus \Omega_\eta}u(x)^p W_p(\delta(x))\,dx.
\end{align*}
Therefore we obtain (\ref{traceineq}). It concludes the proof of Lemma \ref{lemtrace}.
\end{proof}

\par\medskip

\noindent{\bf {End of  the proof of Lemma \ref{lem3.3}:}} \ 
From (\ref{4.26}) and Lemma \ref{lemtrace}, it follows that 
\begin{align*}
& \int_{\Omega_\eta}|\nabla u(x)|^pW_p (\delta(x))\,dx \\ & \ge 
\Lambda_{p}\int_{\Omega} \frac{|u(x)|^p W_p(\delta(x))}{F_{\eta_0}(\delta)^p}\,dx 
-\Lambda_{p}C_\eta
\int_{\Omega\setminus\Omega_\eta}|u(x)|^pW_p(\delta(x))\,dx\\
& \quad 
-L' \left(\e\int_{\Omega_{3\eta}\setminus \Omega_\eta}|\nabla u(x)|^pW_p(\delta(x))\,dx
+C_{\varepsilon,\eta}
\int_{\Omega_{3\eta}\setminus \Omega_\eta}|u(x)|^p W_p(\delta(x))\,dx \right),
\end{align*}
and so 
\begin{align*}
 \int_{\Omega_\eta} & |\nabla u(x)|^pW_p(\delta(x)) \,dx+ L'\e \int_{\Omega_{3\eta}\setminus \Omega_\eta}|\nabla u(x)|^pW_p(\delta(x))\,dx \\  \ge & \ 
\Lambda_{p}\int_{\Omega} \frac{|u(x)|^p W_p(\delta(x))}{F_{\eta_0}(\delta(x))^p} \,dx
-\bigl(L' C_{\varepsilon,\eta}+\Lambda_{p}C_\eta\bigr)
\int_{\Omega\setminus \Omega_\eta}|u(x)|^p W_p(\delta(x))\,dx.
\end{align*}
Now  we  set $L'\e=1$ and $C'= -\bigl(L' C_{\varepsilon,\eta} 
+\Lambda_{p}C_\eta\bigr)<0$, and  we  have  the desired estimate: 
\begin{align*}
 \int_{\Omega}|\nabla u(x)|^pW_p(\delta(x))\,dx \ge 
\Lambda_{p}\int_{\Omega} \frac{|u(x)|^p W_p(\delta(x))}{F_{\eta_0}(\delta(x))^p} \,dx
+C'\int_{\Omega}|u(x)|^p W_p(\delta(x))\,dx,
\end{align*}
which implies that
\begin{equation*}
 \chi^w_{p,\lambda} (u)\ge \Lambda_{p}
\end{equation*}
for $\lambda\le C'$. 
Consequently, it holds that $J^w_\lambda \ge \Lambda_{p}$ for $\lambda\le C'$.  
This together with (\ref{ubd}) implies the desired conclusion. 
It completes the proof of Lemma \ref{lem3.3}.
\end{proof}

\noindent
{\bf  Proof of the assertion 1 of Theorem \ref{Main}.} \ 
By Lemma \ref{lem3.3} and $\lim_{\lambda\to\infty}J_{p,\lambda}^w=-\infty$,  
the set $\{\lambda \in \mathbf R : J_{p,\lambda}^w = \Lambda_{p} \}$ 
is non-empty and upper bounded. 
Hence the 
$\sup\{\lambda \in \mathbf R : J_{p,\lambda}^w = \Lambda_{p} \}$ exists finitely. 
Put 
\begin{equation}
\lambda^\ast = \sup\{\lambda \in \mathbf R : J_{p,\lambda}^w = \Lambda_{p} \}. \label{lambda*}
\end{equation}
Since the function $\lambda\mapsto J_{p,\lambda}^w$ is non-increasing on $\mathbf R$, 
it follows from Lemma \ref{lem3.2} and Lemma \ref{lem3.3} that 
$J_{p,\lambda}^w=\Lambda_{p}$ for $\lambda<\lambda^\ast$ 
and $J_{p,\lambda}^w<\Lambda_{p}$ for $\lambda>\lambda^\ast$. 
Since $J_{p,\lambda}^w$ is  clearly  Lipschitz continuous on $\mathbf R$ with respect  to $\lambda$,
we have the equality $J_{p,\lambda^\ast}^w=\Lambda_{p}$.
Therefore the assertion 1 of Theorem \ref{Main} is valid.
\qed

\subsection{ $J_{p,\lambda}^w$ is  not  attained when $\lambda < \lambda^\ast$}

Next, we prove the assertion 2 of Theorem \ref{Main}.

\medskip

\noindent
{\bf  Proof of the assertion 2 of Theorem \ref{Main}.} \ 
Suppose that for some $\lambda<\lambda^\ast$ 
the infimum $J_{p,\lambda}^w$ in (\ref{J2}) is attained 
at an element $u\in W^{1,p}_{0}(\Omega;W_p(\delta))\setminus\{0\}$. 
Then, by the assertion 1 of Theorem \ref{Main}, we have that 
\begin{equation}\label{3.14}
 \chi^w_{p,\lambda}(u) =J_{p,\lambda}^w= \Lambda_{p}
\end{equation}
and for $\lambda<\bar{\lambda}<\lambda^\ast$
\begin{equation}\label{3.15}
\chi_{p,\bar{\lambda}}^w (u) \ge J_{p,\bar{\lambda}}^w= \Lambda_{p}. 
\end{equation}
From (\ref{3.14}) and (\ref{3.15}) it follows that 
\begin{equation*}
(\bar{\lambda}-\lambda)\int_{\Omega}|u(x)|^pW_p(\delta(x))\,dx \le 0. 
\end{equation*}
Since $\bar{\lambda}-\lambda>0$, we conclude that 
\begin{equation*}
\int_{\Omega}|u(x)|^pW_p(\delta(x))\,dx = 0,
\end{equation*}
which contradicts $u\ne 0$ in $W_{0}^{1,p}(\Omega;W_p(\delta))$. 
Therefore it completes the proof.
\qed

\subsection{Attainability of $J^w_{p,\lambda}$ when $\lambda>\lambda^\ast$ }

At last, we prove the assertion 3 of Theorem \ref{Main}. 
{
Let $\eta_0$ be  sufficiently small as in Theorem \ref{CT3} and let $\eta\in (0,\eta_0]$. 
}
Let $\{u_k\}$ be a minimizing sequence for the variational problem (\ref{J2}) normalized so that 
\begin{equation}\label{ms1}
\int_{\Omega} \frac{|u_k(x)|^p W_p(\delta(x))}{F_{\eta_0}(\delta(x))^p}  \,dx= 1 \quad \text{for all} \ k.  
\end{equation}
Since $\{u_k\}$ is bounded in $W_{0}^{1,p}(\Omega;W_p(\delta))$, by taking a suitable subsequence, 
we may assume that there exists a $u\in W_{0}^{1,p}(\Omega;W_p(\delta))$ such that 
\begin{align}
\nabla u_k \stackrel{weak}{\longrightarrow} \nabla u \quad 
& \text{in} \ \  (L^p(\Omega;W_p(\delta)))^N, \label{ms2} \\
u_k  \stackrel{weak}{\longrightarrow} u \quad 
& \text{in} \ \  L^p(\Omega;W_p(\delta)/ F_{\eta_0}(\delta)^p), \label{ms3} \\ 
u_k \longrightarrow u \quad & \text{in} \ \  L^p(\Omega;W_p(\delta)) \label{ms4}
\end{align}
and
\begin{equation}\label{} 
u_k \longrightarrow u \quad \text{in} \ \  L^p(\Sigma_\eta; W_p(\delta)) \label{ms5}
\end{equation}
by Remark \ref{rem4.1}. 
Under these preparation we establish the properties of concentration and compactness 
for the minimizing sequence, respectively.

\begin{prop}\label{concentration}
Let $\Omega$ be  a  bounded domain of  class $C^2$ in ${\R}^N$. 
Let $1<p<\infty$ and $w(t)\in W_A(\mathbf R_+)$. 
Let $\lambda\in  \R$. 
Let $\{u_k\}$ be a minimizing sequence for (\ref{J2}) satisfying (\ref{ms1}) 
and $(\ref{ms2})\sim(\ref{ms5})$ with $u=0$. 
Then it holds that 
\begin{equation}\label{3.20}
\nabla u_k \longrightarrow 0 \quad \text{in} \ \ (L^p_{\rm loc}(\Omega;W_p(\delta)))^N 
\end{equation}
and 
\begin{equation}\label{3.21}
J_{p,\lambda}^w = \Lambda_{p}.  
\end{equation}
\end{prop}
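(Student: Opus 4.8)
\textbf{Proof proposal for Proposition \ref{concentration}.}

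The plan is to prove both conclusions by exploiting that the minimizing sequence $\{u_k\}$ concentrates at the boundary $\partial\Omega$, since its weak limit vanishes. First I would establish the local convergence (\ref{3.20}). By the assumptions, $u_k \to 0$ in $L^p(\Omega;W_p(\delta))$ strongly and $u_k \to 0$ in $L^p(\Sigma_\eta;W_p(\delta))$. The key observation is that away from the boundary, the weight $W_p(\delta(x))$ is bounded above and below by positive constants, so on any compact subset $K \Subset \Omega$ the weighted norms are comparable to the standard ones. I would fix a cut-off function $\zeta \in C_c^\infty(\Omega)$ with $\zeta \equiv 1$ on $K$ and test the Euler–Lagrange relation — or more directly use the definition of $\chi^w_{p,\lambda}$ together with (\ref{ms1}) — to control $\int_K |\nabla u_k|^p W_p(\delta)\,dx$. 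Since $\int_\Omega |\nabla u_k|^p W_p(\delta)\,dx$ is bounded and the lower-order terms $\lambda \int_\Omega |u_k|^p W_p(\delta)\,dx \to 0$ by (\ref{ms4}), the numerator of $\chi^w_{p,\lambda}(u_k)$ converges to $\liminf \int_\Omega |\nabla u_k|^p W_p(\delta)\,dx$. The concentration away from any interior compact set then follows because the Hardy inequality (\ref{2.11}) or (\ref{2.7}) forces all the ``mass'' of the gradient to sit in the tubular neighborhood where $F_{\eta_0}(\delta)^{-p}$ blows up.

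For the second conclusion (\ref{3.21}), I would argue that concentration at $\partial\Omega$ forces the infimum value to be exactly $\Lambda_p$. On one hand, Lemma \ref{lem3.2} already gives the upper bound $J^w_{p,\lambda} \le \Lambda_p$. On the other hand, I would establish the matching lower bound $J^w_{p,\lambda} \ge \Lambda_p$ along this particular concentrating sequence. The strategy is to split $\int_\Omega |\nabla u_k|^p W_p(\delta)\,dx$ into the contribution on $\Omega_\eta$ and on $\Omega \setminus \Omega_\eta$, apply Theorem \ref{CT3} on $\Omega_\eta$, and control the surface term on $\Sigma_\eta$ by (\ref{ms5}) together with the trace estimate of Lemma \ref{lemtrace}. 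Because $u = 0$, the strong convergences (\ref{ms4}) and (\ref{ms5}) kill both the term $\lambda \int_\Omega |u_k|^p W_p(\delta)\,dx$ and the boundary integral over $\Sigma_\eta$ in the limit, as well as the remainder integral with the factor $F_\eta(\delta)^{-p} G_\eta(\delta)^{-2}$ outside the concentration region. Passing to the limit and using (\ref{ms1}) to normalize the denominator, I obtain $\liminf_k \chi^w_{p,\lambda}(u_k) \ge \Lambda_p$, hence $J^w_{p,\lambda} \ge \Lambda_p$.

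Combining the two bounds yields (\ref{3.21}). The delicate point I would handle carefully is the treatment of the sharp remainder term
\begin{equation*}
C \int_{\Omega_\eta} \frac{|u_k(x)|^p W_p(\delta(x))}{F_\eta(\delta(x))^p G_\eta(\delta(x))^2}\,dx
\end{equation*}
appearing in (\ref{2.11}): I must verify that this nonnegative quantity does not obstruct the lower bound. Since the Hardy inequality is applied with constant $\Lambda_p$, this remainder sits on the favorable side of the inequality, so discarding it only weakens the estimate in the direction I need and the lower bound $\ge \Lambda_p$ survives.

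\textbf{The main obstacle.} The hardest part will be justifying (\ref{3.20}), i.e.\ that the gradient genuinely vanishes locally and not merely that its weak limit is zero. Weak convergence of $\nabla u_k$ in (\ref{ms2}) does not by itself give strong local convergence, so I expect to need a compactness or convexity argument. The natural route is to use that $F_{\eta_0}(\delta)^{-p}$ is bounded on compact interior subsets (by Lemma \ref{lem lim F=0}, $F_{\eta_0}$ is bounded, and $F_{\eta_0}(\delta)$ is bounded below on $K \Subset \Omega$), which turns the normalization (\ref{ms1}) into genuine control; combined with the strict convexity of the $p$-th power functional and the weak lower semicontinuity, one upgrades weak to strong convergence on compact sets. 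I anticipate the careful bookkeeping of the weight degeneracy near $\partial\Omega$ — ensuring all estimates are uniform in $k$ and that the $\eta \to 0$ limit can be taken — to be where the technical care concentrates.
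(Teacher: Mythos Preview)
Your strategy for (\ref{3.21}) is essentially the paper's: split $\Omega=\Omega_\eta\cup(\Omega\setminus\Omega_\eta)$, apply Theorem \ref{CT3} on $\Omega_\eta$, use (\ref{ms1}) to rewrite $\int_{\Omega_\eta}|u_k|^pW_p/F_{\eta_0}^p$ as $1-\int_{\Omega\setminus\Omega_\eta}(\cdots)$, then let $k\to\infty$ using (\ref{ms4}) and (\ref{ms5}) with $u=0$ to kill the lower-order, interior-Hardy, and surface terms. Together with $J^w_{p,\lambda}\le\Lambda_p$ from Lemma \ref{lem3.2}, this gives $J^w_{p,\lambda}=\Lambda_p$. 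That part is fine.

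Where you go astray is (\ref{3.20}). You flag it as the hardest part and propose cut-offs, Euler--Lagrange testing, or strict convexity of $t\mapsto t^p$. None of these are needed, and none would work as stated: the $u_k$ are only a minimizing \emph{sequence}, not minimizers, so there is no Euler--Lagrange relation to test; and upgrading weak to strong convergence via convexity requires norm convergence $\|\nabla u_k\|_{L^p(K)}\to 0$, which is exactly what you are trying to prove. The normalization (\ref{ms1}) controls $|u_k|^p$, not $|\nabla u_k|^p$, so it does not feed into such an argument.

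The point you are missing is that (\ref{3.20}) falls out \emph{for free} from the very inequality you set up for (\ref{3.21}). When you bound $\chi^w_{p,\lambda}(u_k)$ from below using Theorem \ref{CT3} on $\Omega_\eta$, do not throw away the interior gradient term: you get
\[
\chi^w_{p,\lambda}(u_k)\ \ge\ \Lambda_p\Bigl(1-\!\int_{\Omega\setminus\Omega_\eta}\!\tfrac{|u_k|^pW_p}{F_{\eta_0}^p}\Bigr)
+s(w)L'\!\int_{\Sigma_\eta}\!|u_k|^pW_p
+\int_{\Omega\setminus\Omega_\eta}\!|\nabla u_k|^pW_p
-\lambda\!\int_\Omega\!|u_k|^pW_p.
\]
Letting $k\to\infty$ with (\ref{ms4}), (\ref{ms5}) (and $F_{\eta_0}(\delta)$ bounded below on $\Omega\setminus\Omega_\eta$) yields
\[
J^w_{p,\lambda}\ \ge\ \Lambda_p+\limsup_{k\to\infty}\int_{\Omega\setminus\Omega_\eta}|\nabla u_k|^pW_p(\delta)\,dx.
\]
Since $J^w_{p,\lambda}\le\Lambda_p$, the $\limsup$ is $\le 0$, hence the integral tends to $0$. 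This is exactly (\ref{3.20}), and it is obtained simultaneously with (\ref{3.21}) from a single inequality. So the ``main obstacle'' you anticipate is not an obstacle at all; it is the easier half once you keep the right term.
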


\begin{proof} 
Let $\eta_0>0$ be a sufficiently small number as in Theorem \ref{CT3} and  let $\eta\in (0,\eta_0]$. 
By Hardy's inequality (\ref{2.11}) and (\ref{ms1}) we have that 
\begin{align*}
\int_{\Omega_\eta} &|\nabla u_k(x)|^p W_p(\delta(x))\,dx \\ &
\ge
\Lambda_{p}\!\int_{\Omega_\eta}\frac{|u_k(x)|^pW_p(\delta(x))}{F_{\eta_0}(\delta(x))^p}\,dx 
+ s(w)L' \int_{\Sigma_\eta} |u_k(\sigma_\eta)|^p W_p(\eta)\,d\sigma_\eta 
\nonumber
\\ 
& =\Lambda_{p}\!
\left(1-\int_{\Omega\setminus\Omega_\eta}
\frac{|u_k(x)|^pW_p(\delta(x))}{F_{\eta_0}(\delta(x))^p}dx\right)
+ s(w)L' \int_{\Sigma_\eta} |u_k(\sigma_\eta)|^p W_p(\eta)\,d\sigma_\eta, \nonumber 
\end{align*}
and so 
\begin{align}\label{3.22}
\chi_\lambda^\alpha(u_k) 
&\ge \Lambda_{p}\!
\left(1-\int_{\Omega\setminus\Omega_\eta}\frac{|u_k(x)|^pW_p(\delta(x))}{F_{\eta_0}(\delta(x))^p}\,dx\right) 
+ s(w)L' \int_{\Sigma_\eta} |u_k(\sigma_\eta)|^p W_p(\eta)\,d\sigma_\eta \nonumber \\ 
& \quad + \int_{\Omega\setminus\Omega_\eta}|\nabla u_k(x)|^pW_p(\delta(x))\,dx 
-\lambda \int_\Omega |u_k(x)|^pW_p(\delta(x))\,dx. 
\end{align}
Since there exists a positive number $C_\eta$ independent of $u_k$ such that 
\begin{equation*}
\int_{\Omega\setminus\Omega_\eta}\frac{|u_k(x)|^pW_p(\delta(x))}{F_{\eta_0}(\delta(x))^p}dx 
\le C_\eta \int_{\Omega}|u_k(x)|^pW_p(\delta(x))\,dx,
\end{equation*}
it follows from (\ref{ms4}) with $u=0$ that 
\begin{equation}\label{3.23}
\lim_{k\to\infty}\int_{\Omega\setminus\Omega_\eta}\frac{|u_k(x)|^pW_p(\delta(x))}{F_{\eta_0}(\delta(x))^p}\,dx=0  
\end{equation}
Hence, letting $k\to\infty$ in (\ref{3.22}), 
by (\ref{3.23}), (\ref{ms4}) and (\ref{ms5}) with $u=0$, we obtain that 
\begin{equation*}
0\le\limsup_{k\to\infty}\int_{\Omega\setminus\Omega_\eta}|\nabla u_k(x)|^pW_p(\delta(x))\,dx 
\le J_{p,\lambda}^w -\Lambda_{p}. 
\end{equation*}
Since $J_{p,\lambda}^w -\Lambda_{p}\le 0$ by Lemma \ref{lem3.2}, we conclude that 
$J_{p,\lambda}^w -\Lambda_{p}=0$ and 
\begin{equation}\label{3.24}
\lim_{k\to\infty}\int_{\Omega\setminus\Omega_\eta}|\nabla u_k(x)|^pW_p(\delta(x))\,dx=0. 
\end{equation}
These show (\ref{3.20}) and (\ref{3.21}). 
Consequently it completes the proof.
\end{proof}

\begin{prop}\label{compactness}
Let $\Omega$ be  a  bounded domain of  class $C^2$ in ${\R}^N$. 
Let $1<p<\infty$, $w(t)\in W_A(\mathbf R_+)$ and $\lambda\in  \R$. 
Let $\{u_k\}$ be a minimizing sequence for (\ref{J2}) satisfying (\ref{ms1}) 
and $(\ref{ms2})\sim(\ref{ms5})$ with $u\ne 0$. 
Then it holds that 
\begin{equation}\label{3.25}
J_{p,\lambda}^w =\min\bigl(\Lambda_{p},  \chi^w_{p,\lambda}(u)\bigr). 
\end{equation}
In addition, if $J_{p,\lambda}^w <\Lambda_{p}$, then it holds that 
\begin{equation}\label{minimizer} 
J_{p,\lambda}^w= \chi^w_{p,\lambda}(u), 
\end{equation}
namely $u$ is a minimizer for (\ref{J2}), and  
\begin{equation}\label{3.27}
u_k \longrightarrow u \quad \text{in} \ \ W_{0}^{1,p}(\Omega;W_p(\delta)). 
\end{equation} 
\end{prop}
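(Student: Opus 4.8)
The plan is to pass to the limit in $\chi^w_{p,\lambda}(u_k)$ along the subsequence fixed in (\ref{ms2})--(\ref{ms5}) and to split off from the weak limit $u$ the part of the Dirichlet energy that concentrates on $\partial\Omega$. I set $A=\int_\Omega|\nabla u|^pW_p(\delta)\,dx$ and $B=\int_\Omega|u|^pW_p(\delta)/F_{\eta_0}(\delta)^p\,dx$, and after a further extraction I may assume $a:=\lim_k\int_\Omega|\nabla u_k|^pW_p(\delta)\,dx$ exists (the identity below forces $a$ to be independent of the extraction, so the full sequence in fact converges). Since the denominator of $\chi^w_{p,\lambda}(u_k)$ equals $1$ for every $k$ by (\ref{ms1}) and $\int_\Omega|u_k|^pW_p(\delta)\,dx\to\int_\Omega|u|^pW_p(\delta)\,dx$ by (\ref{ms4}), the minimizing property yields $J^w_{p,\lambda}=a-\lambda\int_\Omega|u|^pW_p(\delta)\,dx$, while the definition of $\chi^w_{p,\lambda}$ gives $B\,\chi^w_{p,\lambda}(u)=A-\lambda\int_\Omega|u|^pW_p(\delta)\,dx$. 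Here $u\ne0$ forces $B\in(0,1]$: positivity holds because $W_p(\delta)/F_{\eta_0}(\delta)^p>0$ a.e., and $B\le1$ follows from Fatou's lemma on the a.e.\ convergent subsequence coming from (\ref{ms4}). Subtracting the two identities, everything reduces to the single estimate
\begin{equation*}
a\ \ge\ A+\Lambda_p(1-B),
\end{equation*}
to which I refer as $(\ast)$.

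\textbf{The crux $(\ast)$, and the main obstacle.} To prove $(\ast)$ I would fix $\eta\in(0,\eta_0]$, apply the Hardy inequality (\ref{2.11}) to $u_k$ on $\Omega_\eta$, and keep the full energy on $\Omega\setminus\Omega_\eta$:
\begin{equation*}
\int_\Omega|\nabla u_k|^pW_p(\delta)\,dx\ \ge\ \Lambda_p\!\int_{\Omega_\eta}\frac{|u_k|^pW_p(\delta)}{F_{\eta_0}(\delta)^p}\,dx+s(w)L'\!\int_{\Sigma_\eta}|u_k|^pW_p(\eta)\,d\sigma_\eta+\int_{\Omega\setminus\Omega_\eta}|\nabla u_k|^pW_p(\delta)\,dx.
\end{equation*}
Letting $k\to\infty$: the first term tends to $\Lambda_p\bigl(1-\int_{\Omega\setminus\Omega_\eta}|u|^pW_p(\delta)/F_{\eta_0}(\delta)^p\,dx\bigr)$ by (\ref{ms1}) and strong $L^p(\Omega;W_p(\delta))$ convergence (the weight $F_{\eta_0}(\delta)^{-p}$ being bounded on $\Omega\setminus\Omega_\eta$); the surface term tends to $s(w)L'\int_{\Sigma_\eta}|u|^pW_p(\eta)\,d\sigma_\eta$ by the compact trace (\ref{ms5}); and the last term obeys weak lower semicontinuity of $v\mapsto\int_{\Omega\setminus\Omega_\eta}|\nabla v|^pW_p(\delta)\,dx$ under (\ref{ms2}). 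The delicate point, and the main obstacle, is that there is no Brezis--Lieb splitting available for the gradients and that the boundary term has the wrong sign when $w\in P(\R_+)$. I would kill it by letting $\eta\to0$ along a good sequence: writing $\phi(t)=\int_{\Sigma_t}|u|^pW_p(t)\,d\sigma_t$, the coarea formula gives $\int_{\Omega_{\eta_0}}|u|^pW_p(\delta)/F_{\eta_0}(\delta)\,dx=\int_0^{\eta_0}\phi(t)/F_{\eta_0}(t)\,dt$, which is finite because $F_{\eta_0}^{-1}\le F_{\eta_0}^{-p}+1$ pointwise, bounding the right-hand side by $B+\int_{\Omega_{\eta_0}}|u|^pW_p(\delta)\,dx<\infty$. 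Since $F_{\eta_0}^{-1}\notin L^1((0,\eta_0))$ by Remark \ref{remark2.2}, necessarily $\liminf_{t\to+0}\phi(t)=0$; choosing $\eta_j\to0$ with $\phi(\eta_j)\to0$ annihilates the surface term, and monotone convergence gives $\int_{\Omega\setminus\Omega_{\eta_j}}|\nabla u|^pW_p(\delta)\,dx\to A$. This yields $(\ast)$.

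\textbf{Conclusion from $(\ast)$.} Granting $(\ast)$, the rest is elementary. Subtracting the two identities of the first paragraph gives $J^w_{p,\lambda}-B\,\chi^w_{p,\lambda}(u)=a-A\ge\Lambda_p(1-B)$, hence $J^w_{p,\lambda}\ge B\,\chi^w_{p,\lambda}(u)+(1-B)\Lambda_p$; as $B\in(0,1]$ this is a convex combination, so $J^w_{p,\lambda}\ge\min(\Lambda_p,\chi^w_{p,\lambda}(u))$. The reverse inequality is immediate, since $u$ is an admissible competitor (so $J^w_{p,\lambda}\le\chi^w_{p,\lambda}(u)$) and $J^w_{p,\lambda}\le\Lambda_p$ by Lemma \ref{lem3.2}; this proves (\ref{3.25}). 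If moreover $J^w_{p,\lambda}<\Lambda_p$, then the minimum is attained by $\chi^w_{p,\lambda}(u)$, which gives (\ref{minimizer}), i.e.\ $u$ is a minimizer. Finally, plugging $J^w_{p,\lambda}=\chi^w_{p,\lambda}(u)$ into $a-A=J^w_{p,\lambda}-B\,\chi^w_{p,\lambda}(u)$ gives $a-A=J^w_{p,\lambda}(1-B)$, while $(\ast)$ reads $a-A\ge\Lambda_p(1-B)$; since $J^w_{p,\lambda}<\Lambda_p$ and $B\le1$, this forces $B=1$ and hence $a=A$. Thus $\int_\Omega|\nabla u_k|^pW_p(\delta)\,dx\to\int_\Omega|\nabla u|^pW_p(\delta)\,dx$, and since $\nabla u_k\rightharpoonup\nabla u$ in the uniformly convex space $(L^p(\Omega;W_p(\delta)))^N$, the Radon--Riesz property upgrades this to strong convergence of the gradients; together with (\ref{ms4}) this establishes (\ref{3.27}), completing the proof.
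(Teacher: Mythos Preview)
Your proof is correct and follows essentially the same route as the paper: apply (\ref{2.11}) on the collar $\Omega_\eta$, pass to the limit in $k$ using (\ref{ms4}), (\ref{ms5}) and weak lower semicontinuity, let $\eta\to0$ to obtain your $(\ast)$ (which is exactly the paper's (\ref{3.31})), and then conclude via the convex-combination bound and Radon--Riesz. The only notable variation is in dismissing the surface term on $\Sigma_\eta$ as $\eta\to0$: the paper simply drops it by sign when $w\in Q(\R_+)$ and uses $W_p(\eta)=w(\eta)^{p-1}\to0$ when $w\in P(\R_+)$, whereas your coarea/non-integrability argument showing $\liminf_{t\to+0}\phi(t)=0$ is a valid uniform alternative that avoids the case distinction.
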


\begin{proof} 
Let $\eta_0>0$ be a sufficiently small number as in Theorem \ref{CT3} and  let $\eta\in (0,\eta_0]$. 
Then we have (\ref{3.22}) by the same arguments as in the proof of Proposition 
\ref{concentration}. 
Since there exists a positive number $C_\eta$ independent of $u_k$ such that 
\begin{equation*}
\int_{\Omega\setminus\Omega_\eta}\frac{|u_k(x)-u(x)|^pW_p(\delta(x))}{F_{\eta_0}(\delta(x))^p} \,dx 
\le C_\eta \int_{\Omega}|u_k(x)-u(x)|^pW_p(\delta(x))\,dx,
\end{equation*}
(\ref{ms4}) implies that 
\begin{equation}\label{3.28} 
\lim_{k\to\infty}\int_{\Omega\setminus\Omega_\eta}\frac{|u_k(x)|^pW_p(\delta(x))}{F_{\eta_0}(\delta(x))^p}\,dx 
= \int_{\Omega\setminus\Omega_\eta}\frac{|u(x)|^pW_p(\delta(x))}{F_{\eta_0}(\delta(x))^p}\,dx. 
\end{equation}
Since it follows from (\ref{ms2}) that $\nabla u_k \longrightarrow \nabla u$ 
weakly in $(L^p(\Omega\setminus \overline{\Omega_\eta};W_p(\delta)))^N$, 
by weakly lower semi-continuity of the $L^p$-norm, we see that 
\begin{align}\label{3.29} 
\liminf_{k\to\infty} \int_{\Omega\setminus {\Omega_\eta}}|\nabla u_k(x)|^pW_p(\delta(x))\,dx 
& \ge \left(\liminf_{k\to\infty} 
\| |\nabla u_k| \|_{L^p(\Omega\setminus\overline{\Omega_\eta} ;W_p(\delta))}\right)^{\!p} 
\nonumber \\ 
& \ge \| |\nabla u| \|_{L^p(\Omega\setminus\overline{\Omega_\eta}; W_p(\delta))}^p 
\nonumber \\ 
& =\int_{\Omega\setminus\Omega_\eta}|\nabla u(x)|^pW_p(\delta(x))\,dx.  
\end{align}
Hence, by letting $k\to\infty$ in (\ref{3.22}), 
from (\ref{ms4}), (\ref{ms5}), (\ref{3.28}) and (\ref{3.29}) it follows that 
\begin{align}\label{3.30}
J_{p,\lambda}^w & \ge \Lambda_{p}\! 
\left(1-\int_{\Omega\setminus\Omega_\eta}\frac{|u(x)|^pW_p(\delta(x))}{F_{\eta_0}(\delta(x))^p}\,dx\right)
+ s(w)L' \int_{\Sigma_\eta} |u(\sigma_\eta)|^p W_p(\eta)\,d\sigma_\eta 
\nonumber \\ 
& \quad \, + \int_{\Omega\setminus\Omega_\eta}|\nabla u(x)|^pW_p(\delta(x))\,dx 
-\lambda \int_\Omega |u(x)|^pW_p(\delta(x))\,dx. 
\end{align}
If $w(t)\in Q({\R}_+)$, then $s(w)=1$,   hence we can omit the integrand on the  surface $\Sigma_\eta$. On the other hand
if $w(t)\in P({\R}_+)$, then $\lim_{t\to+0} W_p(t)=\lim_{t\to+0}w(t)^{p-1}=0 $. 
Thus, letting $\eta\to +0$ in (\ref{3.30}), we obtain that 
\begin{align}\label{3.31}
J_{p,\lambda}^w &\ge \Lambda_{p}\! 
 \left(1-\int_{\Omega}\frac{|u(x)|^pW_p(\delta(x))}{F_{\eta_0}(\delta(x))^p}\,dx\right) 
\nonumber \\ 
& \quad\,+ \int_{\Omega}|\nabla u(x)|^pW_p(\delta(x))\,dx 
-\lambda \int_\Omega |u(x)|^pW_p(\delta(x))\,dx. 
\end{align}
Since it holds that 
\begin{equation}\label{3.32}
0<\int_\Omega |u(x)|^p\frac{W_p(\delta(x))}{F_{\eta_0}(\delta(x))^p}\,dx \le 
\liminf_{k\to\infty}\int_\Omega \frac{|u_k(x)|^pW_p(\delta(x))}{F_{\eta_0}(\delta(x))^p}\,dx =1 
\end{equation}
by $u\ne 0$, (\ref{ms1}), (\ref{ms3}) and weakly lower semi-continuity of the $L^p$-norm, 
we have from (\ref{3.31}) and (\ref{3.32}) that 
\begin{align}\label{3.33} 
J_{p,\lambda}^w
& \ge \Lambda_{p}\! \left(1-\int_{\Omega}\frac{|u(x)|^pW_p(\delta(x))}{F_{\eta_0}(\delta(x))^p}\,dx\right) 
+  \chi^w_{p,\lambda}(u) \int_{\Omega}\frac{|u(x)|^pW_p(\delta(x))}{F_{\eta_0}(\delta(x))^p}\,dx   
\nonumber \\ 
& \ge \min\bigl(\Lambda_{p},  \chi^w_{p,\lambda}(u)\bigr). 
\end{align}
This together with Lemma \ref{lem3.2} implies (\ref{3.25}). 
Moreover, by (\ref{3.25}) and (\ref{3.33}), we conclude that 
\begin{equation}\label{3.34}
J_{p,\lambda}^w
= \Lambda_{p}\! \left(1-\int_{\Omega}\frac{|u(x)|^pW_p(\delta(x))}{F_{\eta_0}(\delta(x))^p}\,dx\right) 
+  \chi^w_{p,\lambda}(u) \int_{\Omega}\frac{|u(x)|^pW_p(\delta(x))}{F_{\eta_0}(\delta(x))^p}\,dx.  
\end{equation}
In addition, if $J_{p,\lambda}^w <\Lambda_{p}$, 
then $J_{p,\lambda}^w= \chi^w_{p,\lambda}(u)$ by (\ref{3.25}), 
and so, it follows from (\ref{3.34}) and (\ref{ms1}) that 
\begin{equation}\label{3.35} 
\int_{\Omega}\frac{|u(x)|^pW_p(\delta(x))}{F_{\eta_0}(\delta(x))^p}\,dx=1
=\lim_{k\to\infty}\int_{\Omega}\dfrac{|u_k(x)|^pW_p(\delta(x))}{F_{\eta_0}(\delta(x))^p}\,dx. 
\end{equation}
(\ref{ms3}) and (\ref{3.35}) imply that  
\begin{equation}
u_k \longrightarrow u \quad \text{in} \ \ L^p(\Omega,{W_p(\delta)}/{F_{\eta_0}(\delta)^p}). 
\end{equation}
Further, by (\ref{ms1}), (\ref{ms4}), (\ref{minimizer}) and (\ref{3.35}), we obtain that 
\begin{align*}
\int_{\Omega} &|\nabla u_k(x)|^pW_p(\delta(x))\,dx
 =   \chi^w_{p,\lambda}(u_k)+\lambda \int_\Omega |u_k(x)|^pW_p(\delta(x))\,dx 
\\ 
& \longrightarrow 
\chi_\lambda^\alpha(u)+\lambda \int_\Omega |u(x)|^pW_p(\delta(x))\,dx 
=\int_{\Omega}|\nabla u(x)|^pW_p(\delta(x))\,dx. 
\end{align*}
This together with (\ref{ms2}) implies that 
\begin{equation}\label{4.55}
\nabla u_k \longrightarrow \nabla u \quad \text{in} \ \ (L^p(\Omega;W_p(\delta)))^N . 
\end{equation}
(\ref{4.55}) and (\ref{ms4}) show (\ref{3.27}). 
Consequently it completes the proof.
\end{proof}

\medskip

\noindent
{\bf  Proof of the assertion 3 of Theorem \ref{Main}.} \ 
Let $\lambda>\lambda^\ast$. 
Then $J_{p,\lambda}^w < \Lambda_{p}$ by the assertion 1 of Theorem \ref{Main}. 
Let $\{u_k\}$ be a minimizing sequence for (\ref{J2}) satisfying $(\ref{ms1}) \sim (\ref{ms5})$. 
Then we see that $u\ne 0$ by Proposition \ref{concentration}. 
Therefore, by applying Proposition \ref{compactness}, 
we conclude that $ \chi^w_{p,\lambda}(u) =J_{p,\lambda}^w$, 
namely $u$ is a minimizer for (\ref{J2}). 
It finishes the proof.
\qed

\section{Proof of  Theorem \ref{p=2}}

For  $M>0$  and  $w(t)\in W(\R_+)$, we define the following operator:
\begin{equation}
L_{M}^w(u(x)) =-\div (w(\delta(x)) \nabla u(x)) - J^w_{2,\lambda^*}\frac{w(\delta(x))u(x)}{F_{\eta_0}(\delta(x))^2}  +M w(\delta(x)) u(x).
\end{equation}
Our proof of  Theorem \ref{p=2} is relied on  the  maximum principle and the following 
non-existence result on the operator $L_{M}^w$ :
\begin{lem}\label{lem5.1} Let $\Omega$ be a bounded domain of class $C^2$ in ${\R}^N$. 
Assume  that  $w(t)\in W_A(\R_+)$ and $w(t)$ satisfies the condition  (\ref{3.3}). 
If $u(x)$ is a non-negative function in 
$W^{1,2}_0(\Omega; w(\delta))\cap C(\overline{\Omega})$ 
and satisfies the inequality 
\begin{equation}
L_{M}^w(u(x))\ge 0\qquad \mbox{ in } \ \Omega \label{positive}
\end{equation}
in the  sense of distributions for some  positive  number $M$, then $u(x)\equiv 0$.
\end{lem}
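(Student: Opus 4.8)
The plan is to argue by contradiction. Assuming there is a nonzero, nonnegative $u\in W^{1,2}_0(\Omega;w(\delta))\cap C(\overline\Omega)$ with $L^w_M u\ge 0$ distributionally, I would show that near $\partial\Omega$ the function $u$ is forced to behave like the critical Hardy profile $\phi(x)=f_{\eta_0}(\delta(x))^{1/2}$, and then contradict $u\in W^{1,2}_0(\Omega;w(\delta))$ because $\phi$ has infinite weighted Dirichlet energy. First I would upgrade $u\ge 0$ to $u>0$ in $\Omega$: on every compact subset of $\Omega$ the weight $w(\delta)$ is smooth and bounded away from $0$ and $\infty$ and $F_{\eta_0}(\delta)$ is bounded below, so $L^w_M$ is uniformly elliptic there with bounded coefficients, and the strong maximum principle applied to the supersolution $u$ gives either $u\equiv 0$ (done) or $u>0$ throughout $\Omega$. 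Note that by Remark \ref{rem2.2} the coefficient of the Hardy term is the \emph{critical} one, $J^w_{2,\lambda^\ast}=\tfrac14=\Lambda_2$, so the statement is genuinely a borderline phenomenon and naive testing cannot succeed.

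The engine is the ground–state substitution. A direct one–variable computation shows that $\phi=f_{\eta_0}^{1/2}$ satisfies $-(w\phi')'=\tfrac14\,w\phi/F_{\eta_0}^{2}$ in the variable $\delta$ (valid for both $P(\R_+)$ and $Q(\R_+)$, since $f_{\eta_0}'=s(w)w^{-1}$). Writing $u=\phi v$, which is legitimate as $u,\phi>0$ in $\Omega_{\eta_0}$, transforms $L^w_M u\ge 0$ into a degenerate–elliptic inequality for $v>0$ whose principal part is $-\div(F_{\eta_0}\nabla v)$, because $w\phi^{2}=F_{\eta_0}$. Rewriting the improved Hardy inequality (\ref{2.11}) in the variable $v$ moreover yields an inequality of the same critical Hardy type one level down, with weight $1/(F_{\eta_0}G_{\eta_0}^{2})$ and critical profile $G_{\eta_0}^{1/2}$. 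At this point I would record the borderline integrability facts from Remark \ref{remark2.2}: $F_{\eta_0}^{-1}\notin L^{1}((0,\eta_0))$ and $(F_{\eta_0}G_{\eta_0})^{-1}\notin L^{1}$, while $(F_{\eta_0}G_{\eta_0}^{2})^{-1}\in L^{1}$.

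Finally I would exploit the recurrence encoded in $F_{\eta_0}^{-1}\notin L^{1}$: the one–dimensional diffusion generated by $\frac{d}{dt}\bigl(F_{\eta_0}\frac{d}{dt}\bigr)$ is recurrent at $t=0$, so a nonnegative supersolution of $-\div(F_{\eta_0}\nabla v)+(\text{lower order})\,v\ge 0$ on $\Omega_{\eta_0}$ is forced, by Liouville-type rigidity together with a comparison argument, to approach a positive constant up to the boundary; hence $u$ and $\phi$ share the same weighted energy growth, and $\int_\Omega w(\delta)|\nabla u|^{2}=+\infty$, contradicting $u\in W^{1,2}_0(\Omega;w(\delta))$ unless the constant is $0$, i.e. $u\equiv 0$. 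Condition (\ref{3.3}), $\lim_{t\to+0}F_{\eta_0}(t)G_{\eta_0}(t)^{2}=0$, enters exactly here, guaranteeing that the lower-order potential created by the substitution — the bounded shift $+M$ together with the curvature term $\propto \Delta\delta/F_{\eta_0}$ arising from $\div(\nabla\delta)$ — is subordinate to the principal part near $\partial\Omega$, so that comparison and recurrence survive up to the boundary. The hard part will be precisely this boundary layer: the additive term $+Mw(\delta)u$ gives no help (the quadratic form is $L^{2}(w)$–coercive yet Hardy–critical, so a Picone-type test returns only a finite bound and never a contradiction), and the curvature correction together with the surface term $s(w)L'\int_{\Sigma_\eta}$ in (\ref{2.11}) must be absorbed using the admissibility bound $\sqrt t\,G_{\eta_0}\le K$ of (\ref{2.15}) and (\ref{3.3}) before the criticality can be made to force $u\equiv 0$.
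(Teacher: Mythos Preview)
Your high-level plan matches the paper's: argue by contradiction, upgrade $u\ge 0$ to $u>0$ via the strong maximum principle, use that $J^w_{2,\lambda^\ast}=\tfrac14$ is the critical Hardy constant, and show that $u$ is forced to have a boundary profile incompatible with membership in $W^{1,2}_0(\Omega;w(\delta))$. The identification of $\phi=f_{\eta_0}(\delta)^{1/2}$ as the critical Hardy profile and of $w\phi^{2}=F_{\eta_0}$ as the weight appearing after the ground-state substitution is also correct.

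The gap is the step you call ``Liouville-type rigidity together with a comparison argument.'' You assert that $v=u/\phi$ must approach a positive constant at $\partial\Omega$, but this is not proved and is in fact false when $w\in P_A(\R_+)$: there $f_{\eta_0}\to\infty$, so $\phi\to\infty$, while $u\in C(\overline\Omega)$ is bounded, hence $v\to 0$. Recurrence ($F_{\eta_0}^{-1}\notin L^1$) by itself does not manufacture a positive lower bound for $v$; one needs an explicit barrier and a comparison principle adapted to the \emph{critical} operator $L^w_M$, and you supply neither. Consequently the conclusion ``$u$ and $\phi$ share the same weighted energy growth'' is unsupported.

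What the paper actually does is construct the family of subsolutions $v_s(t)=f_{\eta_0}(t)^{1/2}G_{\eta_0}(t)^{-s}$ for $s>\tfrac12$ (your $\phi$ damped by $G^{-s}$). A direct computation gives
\[
L_M^w\bigl(v_s(\delta)\bigr)=-\,\frac{G_{\eta_0}(\delta)^{-s-2}}{w(\delta)\,f_{\eta_0}(\delta)^{3/2}}\Bigl\{s(s+1)+\Delta\delta\cdot F_{\eta_0}\bigl(\tfrac{s(w)}{2}G_{\eta_0}^{2}+sG_{\eta_0}\bigr)-M\,F_{\eta_0}^{2}G_{\eta_0}^{2}\Bigr\},
\]
and condition (\ref{3.3}) (together with $F_{\eta_0}\to 0$, $G_{\eta_0}^{-1}\to 0$) forces the braced quantity to be positive near $\partial\Omega$, hence $L_M^w(v_s(\delta))\le 0$ in $\Omega_{\eta_0}$. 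Choosing $\varepsilon>0$ with $\varepsilon v_s\le u$ on $\Sigma_{\eta_0}$ and testing $L^w_M(\varepsilon v_s-u)\le 0$ against $(\varepsilon v_s-u)^+\in W^{1,2}_0(\Omega_{\eta_0};w(\delta))$, the Hardy inequality (\ref{2.11}) yields $(\varepsilon v_s-u)^+\equiv 0$, i.e.\ $u\ge \varepsilon v_s$. Letting $s\downarrow\tfrac12$ gives $u\ge \varepsilon f_{\eta_0}^{1/2}G_{\eta_0}^{-1/2}$, whence $u^{2}w/F_{\eta_0}^{2}\ge \varepsilon^{2}(F_{\eta_0}G_{\eta_0})^{-1}\notin L^{1}(\Omega_{\eta_0})$, contradicting $u\in W^{1,2}_0(\Omega;w(\delta))$ via (\ref{2.11}). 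Thus the correct lower bound on $v$ is only $\varepsilon G_{\eta_0}^{-1/2}\to 0$, and the divergent integral that delivers the contradiction is $\int (F_{\eta_0}G_{\eta_0})^{-1}$, not $\int F_{\eta_0}^{-1}$ as your sketch suggests.
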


Admitting  this lemma for  the  moment, we prove Theorem \ref{p=2}.

\medskip

\noindent
{\bf Proof of Theorem \ref{p=2}.} \ 
If the infimum $J^w_{2,\lambda^*}$ in (\ref{J2}) is achieved  by  a function $u(x)$ then it is also achieved by $|u(x)|$. Therefore  there exists $u(x)\in W^{1,2}_0(\Omega;w(\delta))$, $u(x)\ge 0$ such  that
$$ -\div (w(\delta(x)) \nabla u(x)) - J^w_{2,\lambda^*}\frac{w(\delta(x))u(x)}{F_{\eta_0}(\delta(x))^2} -\lambda^* w(\delta(x)) u(x)=0.$$
By the standard  regularity theory  of the elliptic  type,  
we see that $u(x)\in C(\overline{\Omega})$, 
and by the maximum principle, $u(x)>0$ in $\Omega$. 
Then $u(x)$  clearly satisfies the inequality (\ref{positive}) for some $M>0$,
and hence the assertion of Theorem \ref{p=2} is  a consequence of Lemma \ref{lem5.1}. 
\qed

\medskip

\noindent
{\bf Proof of Lemma \ref{lem5.1}.} \ 
Assume by contradiction that there exists a non-negative function $u(x)$ as  in Lemma \ref{lem5.1}. By the  maximum principle, we  see  $u(x) >0$ in $\Omega$.
Let  us  set 
\begin{equation*}
v_s(t)= f_{\eta_0}(t)^{1/2}G_{\eta_0}(t)^{-s}\qquad \mbox{for} \ \ \ s > 1/2.
\end{equation*}
Then we have $v_s(t)\in W_0^{1,2}((0,\eta_0);w)$ 
and $v_s(\delta(x))\in W_0^{1,2}(\Omega_{\eta_0};w(\delta))$. 
We  assume that  
$\eta_0$ is sufficiently small so  that $\delta(x)\in C^2(\Omega_{\eta_0})$, 
and Theorem \ref{CT3} holds in $\Omega_{\eta_0}$. 
Since $|\nabla \delta(x)|=1$, we have for $\delta=\delta(x)$
$$ \div (w(\delta) \nabla ( v_s(\delta)))= w(\delta) v'_s(\delta)\Delta \delta
+ w'(\delta)v'_s(\delta) + w(\delta) v''_s(\delta).$$
With somewhat more calculations we  have 
\begin{align} \div (w(\delta) \nabla &(v_s(\delta))) = 
f_{\eta_0}(\delta)^{-1/2} G_{\eta_0}(\delta)^{-s}
\left( s(w)/2 + sG_{\eta_0}(\delta)^{-1}\right)\Delta \delta \notag \\
&+ w(\delta)^{-1}f_{\eta_0}(\delta)^{-3/2} G_{\eta_0}(\delta)^{-s}
\left( -1/4+ s(s+1)G_{\eta_0}(\delta)^{-2}\right).\notag
\end{align}
Since $J^w_{2,\lambda^*}=1/4$,  we  have 
\begin{align} &L_{M}^w(v_s(\delta))
 = 
-w(\delta)^{-1}f_{\eta_0}(\delta)^{-3/2} G_{\eta_0}(\delta)^{-s-2} \notag\\ 
&\times \bigl\{ s(s+1) +\Delta \delta F_{\eta_0}(\delta)\left( s(w)  G_{\eta_0}(\delta)^2/2+ sG_{\eta_0}(\delta)\right)
-M F_{\eta_0}(\delta)^{2} G_{\eta_0}(\delta)^{2} \bigr\}.\notag
\end{align}
From Lemma \ref{lem lim F=0}, 
Remark \ref{remark2.2}, 1 and (\ref{3.3}) it follows that  
\begin{equation*}
F_{\eta_0}(t), \ G_{\eta_0}(t)^{-1}, \ F_{\eta_0}(t) G_{\eta_0}(t), \ F_{\eta_0}(t) G_{\eta_0}(t)^2 \ 
\longrightarrow 0 \quad\text{as} \ \ t\to+0.
\end{equation*}
Therefore  we have 
$$ L_M^w(v_s(\delta(x)))\le 0 \qquad \mbox { in } \ \Omega_{\eta_0}.$$
Now  we choose a small  $\e>0$  so that $\e v_s(\delta(x)) \le u(x)$ on $\Sigma_{\eta_0}$, and set
$w_s(\delta(x))= \e v_s(\delta(x))-u(x)$. 
Then  $w_s^+(\delta(x)) =\max\bigl( w_s(\delta(x)),0\bigr) \in W_0^{1,2}(\Omega_{\eta_0};w(\delta))$, and we  see that
\begin{equation*}
 L_M^w(w_s(\delta(x)))\le 0 \qquad \mbox{ in } \ \Omega _{\eta_0}.
\end{equation*}
Hence  we have for $\delta=\delta(x)$
\begin{equation*}
\int_{\Omega _{\eta_0}} \left( |\nabla w_s^+(\delta)|^2 w(\delta) - \frac{ w(\delta) w_s^+(\delta)^2}{4 F_{\eta_0}(\delta)^2}
+M  w(\delta) w_s^+(\delta)^2\right)\,dx \le 0.
\end{equation*}
But, by Theorem \ref{CT3}, we  have
\begin{equation*}
\int_{\Omega_{\eta_0}} \left( |\nabla w_s^+(\delta(x))|^2 w(\delta(x))
  - \frac{w(\delta(x))w_s^+(\delta(x))^2}{4F_{\eta_0}(\delta(x))^2}  \right)\,dx
\ge 0.
\end{equation*}
Therefore we have $w_s^+(\delta(x))=0$ in $\Omega_{\eta_0}$, and so 
$\e v_s(\delta(x)) \le u(x)$ in $\Omega_{\eta_0}$ for any $s>1/2$. 
By letting $s\to 1/2$,
$ \e f_{\eta_0}(\delta(x))^{1/2} G_{\eta_0}(\delta(x))^{-1/2} 
\le u(x)$ holds in $\Omega_{\eta_0}$. Namely
\begin{equation*}
\frac{ u(x)^2 w(\delta(x))}{F_{\eta_0}(\delta(x))^2}\ge \e^2 \frac{1}{ F_{\eta_0}(\delta(x)) G_{\eta_0}(\delta(x))} 
\quad \text{in} \ \ \Omega_{\eta_0}.
\end{equation*}
Since it  holds  that 
$\bigl(F_{\eta_0}(\delta(x)) G_{\eta_0}(\delta(x))\bigr)^{-1}\notin 
L^1(\Omega_{\eta_0})$ by Remark \ref{remark2.2}, 1, 
we have that 
$u(x)\notin L^2(\Omega_{\eta_0};w(\delta)/F_{\eta_0}(\delta)^2)$. 
This together with Hardy's inequality (\ref{2.11})  
contradicts to that $u(x)\in W^{1,2}_0(\Omega;w(\delta))$. 
\qed

\bigskip\bigskip

\noindent
Hiroshi Ando:\\
Department of Mathematics, Faculty of Science, Ibaraki University,\\
Mito, Ibaraki, 310-8512, Japan;\\
hiroshi.ando.math@vc.ibaraki.ac.jp

\medskip

\noindent
Toshio Horiuchi:\\
Department of Mathematics, Faculty of Science, Ibaraki University,\\
Mito, Ibaraki, 310-8512, Japan;\\
toshio.horiuchi.math@vc.ibaraki.ac.jp

\end{document}